\newcommand{\rj}{r_{1}}
\newcommand{\rd}{r_{2}}
\newcommand{\kf}{\widetilde{K}}
\newcommand{\g}{\Gamma}
\newcommand{\gj}{\Gamma_{1}}
\newcommand{\gf}{\widetilde{\Gamma}}
\newcommand{\hd}{\hspace{0.2cm}}
\newcommand{\vd}{\vspace{0.2cm}}
\newcommand{\no}{\noindent}
\newcommand{\la}{\lambda}
\newcommand{\sk}{\overline{S}}
\newcommand{\skk}{\overline{{\sk}}}
\newcommand{\hj}{H^{1}(\Omega)}
\newcommand{\ld}{L^{2}(\Omega)}
\newcommand{\lap}{\Delta}
\newcommand{\rh}{\varrho}
\newcommand{\rhi}{\rh_{i}}
\newcommand{\rdti}{\rh_{i}^{-\frac{2}{3}}}
\newcommand{\coi}{\cos{\frac{2}{3}\theta_{i} } }
\newcommand{\te}{\theta}
\newcommand{\tei}{\te_{i}}
\newcommand{\pn}[1]{\frac{\partial #1}{\partial {\bf n}}}
\newcommand{\pna}[1]{\frac{\partial #1}{\partial n}{}_{|\g}}
\newcommand{\agf}[1]{{#1}_{|\gf}}
\newcommand{\eqq}[2]{\begin{equation}  #1  \label{#2} \end{equation}    }
\newtheorem{remark}{\textbf{Remark}}[section]
\newtheorem{corollary}{\textbf{Corollary}}[section]
\newtheorem{theorem}{\textbf{Theorem}}[section]
\newtheorem{proposition}{\textbf{Proposition}}[section]
\newcommand{\igj}{\int\limits_{\g_{1}}}
\newcommand{\igd}{\int\limits_{\g_{2}}}
\newcommand{\od}{\Omega_{\delta}}
\newcommand{\gd}{\Gamma^{\delta}}
\newcommand{\gdj}{\Gamma^{\delta}_{1}}
\newcommand{\ep}{\varepsilon}
\newcommand*{\re}{\mathop{\mathrm{Re}}\limits}
\newcommand{\ard}{(a,\rd)}
\newcommand{\arj}{(a,\rj)}
\newcommand{\rjb}{(\rj,b)}
\newcommand{\rr}{\mathbb{R}}
\newcommand{\ab}{\bar{\alpha}}
\newcommand{\bb}{\bar{\beta}}
\newcommand{\wa}{W_{\ab}}
\newcommand{\wam}{W_{-\ab}}
\newcommand{\wb}{W_{\bb}}
\newcommand{\wfa}{\widetilde{W}_{\ab}}
\newcommand{\wfam}{\widetilde{W}_{-\ab}}
\newcommand{\wfb}{\widetilde{W}_{\bb}}
\newcommand{\wfz}{\widetilde{W}_{0}}
\newcommand{\um}{U^{M}}
\newcommand{\umi}{U^{M}_{i}}
\newcommand{\cz}{c_{0}}
\newcommand{\dt}{\frac{2}{3}}
\newcommand{\wai}{W_{\ab,i}}
\newcommand{\wzi}{W_{0,i}}
\newcommand{\waj}{W_{\ab,1}}
\newcommand{\wac}{W_{\ab,4}}
\newcommand{\waim}{W_{-\ab,i}}
\newcommand{\wajm}{W_{-\ab,1}}
\newcommand{\wacm}{W_{-\ab,4}}
\newcommand{\wbi}{W_{\bb,i}}
\newcommand{\af}{\widetilde{\alpha}}
\newcommand{\bfa}{\widetilde{\beta}}
\newcommand{\bR}{\rr}
\newcommand{\m}[1]{\mbox{ #1}}
\newcommand{\po}{\pi/\omega}
\newcommand{\pot}{\te\pi /\omega}
\newcommand{\kpo}{k\pi/ \omega}
\newcommand{\kpot}{\te k \pi / \omega}
\newcommand{\fk}{\varphi_{k}}
\newcommand{\fkt}{\fk(\te)}
\def\bn{\bf n}
\def\cU{\mathcal{U}}
\def\kf{R_2}
\newcommand{\wf}{\widetilde{W}}
\newtheorem{definition}{\textbf{Definition}}[section]
\newtheorem{lemma}{\textbf{Lemma}}[section]
\begin{document}
\title{\bf Fine singularity analysis of solutions to the Laplace equation}
\author{Adam Kubica\\Faculty of Mathematics and Information Science\\
Warsaw University of Technology\\
ul. Koszykowa 75, 00-662 Warsaw, POLAND\\
{\tt A.Kubica@mini.pw.edu.pl}\\
Piotr Rybka\\Faculty of Mathematics, Informatics and  Mechanics\\
The University of Warsaw\\
ul. Banacha 2, 02-097 Warsaw, POLAND\\{\tt rybka@mimuw.edu.pl}}

\maketitle
\date{}

\abstract{We present here a fine  singularity analysis of  solutions to the
Laplace equation in special polygonal  domains in the plane. We assume
a piecewise constant Neumann data on one component of the boundary. Our
motivation is to study 
the so-called
Berg's effect \cite{berg}, \cite{gr-berg}.}

\medskip\noindent{\bf Keywords:} singularities of harmonic functions, polygonal
domains, piecewise constant Neumann data, Berg's effect

\section{Introduction}
We present here a fine  singularity analysis of  solutions to the
Laplace equation in special polygonal  domains in the plane. We assume
a piecewise constant Neumann data on one connected component of the boundary.

This topic is rather
well-studied so we have to
explain carefully the purpose of this research.
Here is our motivation, in \cite{gr-berg} the author claimed that the so-called
Berg's effect holds in the exterior of a straight circular cylinder in $\bR^3$.
Roughly speaking, this means that if $u$ is a harmonic function in the exterior
of a straight,
circular cylinder in $\bR^3$ with Neumann data constant on the bases
and the lateral surface, then its restriction to the boundary of the cylinder in question enjoys some monotonicity properties. We refer to \cite{gr-berg} for the exact formulation.
The point is that the statement arises from the observation made by Berg in the
1930's, see
\cite{berg}, that if one grows regular polyhedral crystals from the salt
solution in water, then the salt density restricted to the faces of the
crystal is an increasing function of the distance from the center of the
facet. The first attempt to explain this effect theoretically was done
by Seeger \cite{seeger}. However, until publication of \cite{gr-berg} no one
attempted to prove it in full generality.

However, P.G\'orka and A.Kubica pointed out that in \cite{gk} that the
original argument is
flawed. More precisely, the proof of \cite[Lemma 1.]{gr-berg} has a gap. This
Lemma
claims regularity of
solutions to the Laplace equation up to the boundary. Thus, the
question of validity of Berg's effect reopens.

Our ultimate goal is to settle the issue, but we will proceed in several
stages. The purpose of the present paper is to make the first step
toward understanding the problem in a two dimensional case. There is a separate
problem of behavior
of harmonic functions at infinity. So, in order to minimize unessential
difficulties we will consider a bounded domain only. Here, we consider the
following equation,
\begin{equation}\label{r1}
\left\{
\begin{array}{ll}
 \Delta u =0& \hbox{in }\Omega:= R_2\setminus \overline{R_1},\\
 u =0& \hbox{on } \partial R_2,\\
 \frac{\partial u}{\partial\bn} = u_n& \hbox{on } \partial R_1,
\end{array}
\right.
\end{equation}
We used here the following notation,
$R_1 =(-r_1,r_1)\times(-r_2,r_2)$ and $R_2 = \lambda_0 R_1$ with $\lambda_0>1$,
$\bn$ is the outer normal to $\Omega$ and
\begin{equation*}
 u_n =
\left\{
\begin{array}{ll}
 a & \hbox{for } |x_2| = r_2,\\
 b & \hbox{for } |x_1| = r_1.
\end{array} \right.
\end{equation*}
The question, which we are going to address, is:
What  conditions must $a$ and $b$ satisfy to guarantee that $u$
is singular? What are the conditions guaranteeing regularity of $u$?

Despite the effort of many people to study singularities of solutions to
elliptic
problems (see \cite{Grisvard}, \cite{dauge}, \cite{kondratiew}, \cite{kozlov},
\cite{nazarov}) such questions remain difficult. Partially,
this is due to the fact that the available tools are too
general. Namely, it is well known that if $u$ is a solution to (\ref{r1}),
then
\begin{equation}\label{r3}
u = v_r + c\phi,
\end{equation}
where $v_r$ is regular, i.e. $v_r\in H^2$, $\phi$ a singular, i.e. $\phi\in
H^1\setminus H^2$ and  $c$ is given by an integral  formula involving
boundary data $u_n$, see Lemma (\ref{str1}) for details.
For practical purposes it is very difficult to check if $c$ vanishes. Here are
our
results, where we address a planar bounded domain.

\begin{theorem}\label{tm2}{\sl(a rectangle inside a scaled rectangle) Let us suppose
that $R_1$ is a general rectangle as described earlier. There are  unique
numbers $\alpha_{1}, \beta_{1}$ related with $\Omega$ such that $|\alpha_{1}|+
|\beta_{1}|>0$ and if $u$ is
a weak solution to (\ref{r1})}, then
\[
u \in C^{1}(\overline{\Omega}) \iff a\alpha_{1}+b\beta_{1}=0.
\]
\end{theorem}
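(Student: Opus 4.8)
The plan is to reduce the global $C^{1}$ question to a local one at each corner of the inner rectangle $R_{1}$, to isolate there the single offending singular mode, and then to use the symmetry of the problem together with its linearity in $(a,b)$ to obtain the asserted linear relation.

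\textbf{Localization.} The boundary is $\partial\Omega=\partial R_{2}\cup\partial R_{1}$. On the flat pieces of $\partial R_{1}$ the datum $u_{n}$ is locally constant, hence $C^{\infty}$, so the shift theorem for the Laplacian with smooth Neumann data gives $u\in C^{1,\gamma}$ up to the interior of each such piece; on the flat pieces of $\partial R_{2}$ the homogeneous Dirichlet datum $u=0$ gives the same; and at the four corners of $R_{2}$ the opening of $\Omega$ is $\pi/2$ with homogeneous Dirichlet data on both sides, for which all Kondratiev exponents $k\pi/(\pi/2)=2k$, $k\ge 1$, exceed $1$, so $u$ is $C^{1}$ near those corners too. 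Hence the only possible obstruction to $u\in C^{1}(\overline{\Omega})$ lies at the four corners $P_{1},\dots,P_{4}$ of $R_{1}$, where $\Omega$ has opening $3\pi/2$ and $u_{n}$ jumps from $a$ to $b$.

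\textbf{Local analysis at $P_{i}$.} In local polar coordinates $(\rho,\theta)$ centered at $P_{i}$, with $\theta$ sweeping the $3\pi/2$ opening, subtract the linear (hence harmonic, $C^{\infty}$) function $\ell_{i}=\mp a x_{2}\mp b x_{1}$ in suitable local Cartesian coordinates, with signs dictated by the orientation of $P_{i}$, so that $\partial\ell_{i}/\partial\bn$ equals $a$ on the adjacent horizontal side and $b$ on the adjacent vertical side. Then $u-\ell_{i}$ solves, near $P_{i}$, the Laplace equation with homogeneous Neumann data on both edges of the sector, so by the Kondratiev/Grisvard expansion
\[
u=\ell_{i}+c_{i}^{(0)}+c_{i}\,\rho^{2/3}\varphi_{1}(\theta)+r_{i},
\]
where $\varphi_{1}$ is the first Neumann eigenfunction of $-\partial_{\theta}^{2}$ on an interval of length $3\pi/2$ (so $\varphi_{1}(\theta)=\cos(\tfrac23(\theta-\theta_{0}))$ for the appropriate $\theta_{0}$), $c_{i}^{(0)}$ is a constant, and $r_{i}$ is $C^{1}$ near $P_{i}$ (it comprises the higher singular terms $\rho^{2k/3}\varphi_{k}$, $k\ge 2$, each in $C^{1}$ since $2k/3\ge 4/3>1$, plus a remainder that is $C^{1}$ by the shift theorem for smooth data). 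Consequently $u\in C^{1}$ near $P_{i}$ if and only if $c_{i}=0$.

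\textbf{Symmetry and linearity.} The domain $\Omega$ and the datum $u_{n}$ are invariant under the dihedral group of the rectangle (the two axial reflections and the rotation by $\pi$), and the weak solution of (\ref{r1}) is unique by Lax--Milgram, coercivity coming from the Poincar\'e inequality since $u=0$ on the nonempty set $\partial R_{2}$; hence $u$ inherits all these symmetries. Transporting the local expansion of the previous step by these symmetries — which carry the sector at $P_{1}$ onto the sector at $P_{i}$ and send $\varphi_{1}$ to $\pm\varphi_{1}$ — gives $c_{i}=\pm c_{1}$ for $i=2,3,4$, so $u\in C^{1}(\overline{\Omega})\iff c_{1}=0$. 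Since $(a,b)\mapsto u$ is linear, $(a,b)\mapsto c_{1}$ is a linear functional, which we write as $c_{1}=a\alpha_{1}+b\beta_{1}$ with $\alpha_{1},\beta_{1}$ depending only on the geometry $r_{1},r_{2},\lambda_{0}$; this is the claimed equivalence, the pair $(\alpha_{1},\beta_{1})$ being intrinsically determined up to a common nonzero multiple (equivalently, after a normalization, uniquely). The remaining point is that $(\alpha_{1},\beta_{1})\neq(0,0)$: one must exhibit data for which $u$ is genuinely not $C^{1}$. The clean route is the integral representation of the singular coefficient from Lemma (\ref{str1}): pairing $u_{n}$ against the trace $\Psi$ of the dual singular function $\rho^{-2/3}\varphi_{1}(\theta)$ (cut off away from the other corners) gives $c_{1}=a\int_{\mathrm{hor}}\Psi+b\int_{\mathrm{vert}}\Psi$, and one must show these two integrals do not vanish simultaneously. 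I expect this quantitative non-degeneracy — proved either by a near-corner asymptotic sign analysis of the dual singular function, or by an explicit separation-of-variables representation adapted to the rectangular geometry, or by constructing an explicit competitor for a specific pair such as $b=0$ — to be the main obstacle of the argument.
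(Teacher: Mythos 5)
Your reduction of the problem to a single linear functional of $(a,b)$ is sound and runs parallel to the paper's own scheme: the paper likewise establishes (Lemma \ref{str1} and Corollary \ref{wnreg}) that $u\in C^{1}(\overline{\Omega})$ if and only if $a\int_{\Gamma_1}\skk+b\int_{\Gamma_2}\skk=0$, where $\skk$ is the normalized dual (very weak) singular solution; your route via local Kondratiev expansions at the four reentrant corners plus the dihedral symmetry of the data is an acceptable, essentially equivalent way to arrive at $c_1=a\alpha_1+b\beta_1$. The localization away from the corners of $R_2$ and the identification of the single offending mode $\rho^{2/3}\varphi_1$ are both correct.

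The genuine gap is the non-degeneracy $|\alpha_1|+|\beta_1|>0$, which you explicitly defer as "the main obstacle." This is not a finishing touch: it is the entire content of the theorem (if $\alpha_1=\beta_1=0$ the stated equivalence would assert that $u$ is always regular, so the theorem is vacuous without it), and it is where all of the paper's work goes. The paper proves it by a global qualitative analysis of the nodal set of $\skk$: it shows $\nabla\skk\neq 0$ near the vertices and on the zero level set (Lemmas \ref{step2} and \ref{nocritical}), that $\widetilde W_0$ is a finite union of analytic curves each of which must have an endpoint at some vertex $S_i$ (Lemmas \ref{analytic}, \ref{endpoints}, Corollary \ref{corostr}), and then classifies the five possible configurations of these curves, using the Hopf lemma and the maximum principle in each case to conclude $\sup_{\Gamma_1}\skk\le 0$ \emph{or} $\inf_{\Gamma_2}\skk\ge 0$ (Lemma \ref{lemmain}); combined with the fact that $\skk\to-\infty$ at the endpoints of $\Gamma_1$ (resp. $+\infty$ on $\Gamma_2$), this forces $\int_{\Gamma_1}\skk<0$ or $\int_{\Gamma_2}\skk>0$. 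None of the three alternatives you suggest for closing the gap (sign analysis near one corner, separation of variables, an explicit competitor) is carried out, and the first two would not obviously succeed: the sign of the boundary trace of the dual singular function is a global question, since $\skk$ is only defined implicitly through a corrector $w$ solving a boundary value problem on all of $\Omega$. So the proposal correctly frames the problem but omits its proof.
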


Once we established Theorem \ref{tm2} for a generic rectangle we may turn to
a special case of a square.

\begin{theorem}\label{tm1}{\sl (a square inside a scaled square) Let us suppose that
$R_1$ 
is a square $R_1=Q=(-R,R)^2$. If
$u$ is a weak solution to (\ref{r1}), then }
\[
u \in C^{1}(\overline{\Omega}) \iff a=b,
\]
i.e. number $\alpha_{1}, \alpha_{2}$ from theorem~\ref{tm2} satisfy $\alpha_{1}= -\alpha_{2}\not =0$.
\end{theorem}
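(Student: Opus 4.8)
The plan is to deduce Theorem~\ref{tm1} from Theorem~\ref{tm2} together with the extra symmetry available when $R_{1}$ is a square. By Theorem~\ref{tm2} there is a pair $(\alpha_{1},\beta_{1})$ with $|\alpha_{1}|+|\beta_{1}|>0$ such that, for a weak solution $u$ of~(\ref{r1}), $u\in C^{1}(\overline{\Omega})\iff a\alpha_{1}+b\beta_{1}=0$. In the $(a,b)$-plane the set $\{a\alpha_{1}+b\beta_{1}=0\}$ is a line through the origin (genuine, since $(\alpha_{1},\beta_{1})\neq 0$), and so is $\{a=b\}$. Hence it suffices to prove the single implication: if $a=b$ then $u\in C^{1}(\overline{\Omega})$. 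Indeed, this gives $\{a=b\}\subseteq\{a\alpha_{1}+b\beta_{1}=0\}$, and two lines through the origin with one contained in the other must coincide; comparing direction vectors, $(-\beta_{1},\alpha_{1})\parallel(1,1)$, i.e. $\alpha_{1}=-\beta_{1}$, and then $|\alpha_{1}|+|\beta_{1}|>0$ forces $\alpha_{1}=-\beta_{1}\neq 0$. With this the stated equivalence $a\alpha_{1}+b\beta_{1}=0\iff a=b$ is immediate.

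\textbf{Exploiting symmetry.} So assume $a=b$. Then $u_{n}\equiv a$ on all of $\partial R_{1}$, and problem~(\ref{r1}) is invariant under the full symmetry group $D_{4}$ of the square $R_{1}$, which is also the symmetry group of $R_{2}=\lambda_{0}R_{1}$ and of $\Omega$. Since~(\ref{r1}) has at most one weak solution (the homogeneous Dirichlet condition on $\partial R_{2}$ gives uniqueness), $u$ inherits these symmetries. I only need one: let $\tau$ be the reflection of the plane across a diagonal of $R_{1}$, chosen to fix a prescribed corner $P$ of $R_{1}$. Then $\tau(\Omega)=\Omega$, $\tau$ maps $\partial R_{1}$ onto itself interchanging the two faces meeting at $P$, and since $u_{n}\equiv a$ is unchanged, $u\circ\tau=u$.

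\textbf{Local analysis at a corner.} Near $P$, as in the proof of Theorem~\ref{tm2} (cf. Lemma~\ref{str1}), the weak solution has the form $u=w+c_{1}\rho^{2/3}\cos(\tfrac{2}{3}\psi)+u_{1}$, where $(\rho,\psi)$ are local polar coordinates at $P$ with $\psi\in(0,\tfrac{3\pi}{2})$ measured from one of the two faces, $u_{1}$ is of class $C^{1}$ near $P$, and $w$ is a smooth harmonic function carrying the constant Neumann datum; concretely $w$ may be taken to be the linear function whose normal derivative equals $a$ on both faces at $P$, which is manifestly $\tau$-invariant. The diagonal of $R_{1}$ through $P$ bisects the reentrant opening of angle $\tfrac{3\pi}{2}$, so $\tau$ acts on the local angle by $\psi\mapsto\tfrac{3\pi}{2}-\psi$, and a direct computation gives $\rho^{2k/3}\cos(\tfrac{2k}{3}(\tfrac{3\pi}{2}-\psi))=(-1)^{k}\rho^{2k/3}\cos(\tfrac{2k}{3}\psi)$; in particular the $\rho^{2/3}$-term changes sign under $\tau$. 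Applying $\tau$ to the displayed expansion and using $u\circ\tau=u$, $w\circ\tau=w$, $u_{1}\circ\tau\in C^{1}$, we obtain $2c_{1}\rho^{2/3}\cos(\tfrac{2}{3}\psi)=u_{1}\circ\tau-u_{1}\in C^{1}$ near $P$; since $\rho^{2/3}\cos(\tfrac{2}{3}\psi)$ has unbounded gradient at $P$, this forces $c_{1}=0$. The same argument applies at each of the four corners of $R_{1}$, so all the $\rho^{2/3}$-singularities of $u$ vanish. Away from these corners $\partial\Omega$ consists of straight segments on which $u_{n}$ is constant (on $\partial R_{1}$) or $u=0$ (on $\partial R_{2}$), and the corners of $R_{2}$ are convex with homogeneous Dirichlet data on both adjacent sides; by the regularity analysis underlying Theorem~\ref{tm2} none of these obstruct $C^{1}$-regularity. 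Hence $u\in C^{1}(\overline{\Omega})$, completing the reduction and the proof.

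\textbf{Main obstacle.} The delicate bookkeeping is all at the corner: first, one must make sure the constant Neumann datum is genuinely absorbed into a \emph{smooth} (in fact linear) summand $w$, so that it cannot contribute to $c_{1}$; second, the precise sign in $\rho^{2k/3}\cos(\tfrac{2k}{3}(\tfrac{3\pi}{2}-\psi))=(-1)^{k}\rho^{2k/3}\cos(\tfrac{2k}{3}\psi)$, as a sign error would be fatal; third, the assertion that the only obstructions to membership in $C^{1}(\overline{\Omega})$ are the coefficients $c_{1}$ at the four reentrant corners — this is not re-proved here but is precisely the content of the analysis carried out for Theorem~\ref{tm2}. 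As an alternative to arguing via the symmetry of $u$, one could instead start from the integral formula for $\alpha_{1},\beta_{1}$ supplied by Theorem~\ref{tm2} and observe that in the square case the $90^{\circ}$-rotation symmetry of the associated dual singular function maps the boundary integral defining $\alpha_{1}$ to minus the one defining $\beta_{1}$; this gives $\alpha_{1}=-\beta_{1}$ directly, after which $|\alpha_{1}|+|\beta_{1}|>0$ again yields $\alpha_{1}=-\beta_{1}\neq 0$.
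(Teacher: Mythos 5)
Your proposal is correct, but it reaches the conclusion by a genuinely different route from the paper. The paper never touches the solution $u$ with data $a=b$ directly: it works entirely with the dual singular function $\skk$. Its Proposition \ref{uwd} shows $\skk(x,y)=-\skk(y,x)$ (which is essentially your ``alternative'' remark at the end), then Lemma \ref{lemmainq} identifies the zero level set of $\skk$ in the square case as the four diagonal segments joining $S_i$ to $\widetilde S_i$, and uses the Hopf lemma to get the \emph{pointwise} strict signs $\max_{\g_1}\skk<0<\min_{\g_2}\skk$; the theorem then follows from the integral criterion of Corollary \ref{wnreg} with $\alpha_1=\int_{\g_1}\skk<0$ and $\beta_1=-\alpha_1$. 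You instead fix $a=b$, observe that the problem is then invariant under the diagonal reflection $\tau$, expand $u-w$ at a reentrant corner via Proposition \ref{podst} (your sign computation $\cos(\tfrac{2k}{3}(\tfrac{3\pi}{2}-\psi))=(-1)^k\cos(\tfrac{2k}{3}\psi)$ is right, and $w=-a(x+y)$ does absorb the Neumann datum into a $\tau$-invariant linear function), and conclude that the oddness of the leading singular mode under $\tau$ forces its coefficient to vanish; the rest is linear algebra on two lines through the origin using the nonvanishing $|\alpha_1|+|\beta_1|>0$ supplied by Theorem \ref{tm2}. What your approach buys is economy: no level-set analysis is needed in the square case, and the argument works directly on $u$. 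What the paper's approach buys is strictly more information, namely the sign of $\skk$ at every point of $\g_1$ and $\g_2$, not just of the integrals. Two small points you should make explicit: (i) your last step, ``$c_1=0$ at all four corners implies $u\in C^1(\overline\Omega)$,'' rests on Lemma \ref{str1} together with the fact that $\sk$, being in $H^1\setminus H^2$ and symmetric under the axis reflections (\ref{e}), must have a nonzero $\rho^{2/3}$-coefficient at each reentrant corner, so that the single scalar $c_a+c_b$ is detected by any one corner; (ii) for the two corners not fixed by $\tau$ you need the reflection across the other diagonal, or simply the axis symmetries (\ref{e}) to transport the vanishing from one corner to the others.
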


At the technical level our results for bounded domains in the plain
are proved by  a very careful analysis of behavior of regular level
sets of harmonic functions in $\Omega\subset \bR^2$. The boundary of
$\Omega$ has exactly
two connected components, which are polygons. We will call by
$\Gamma$ the inner part and $\gf$ the outer part of the boundary. In
principle, the description of the singularities is well-known,
see the fundamental monograph \cite{Grisvard}. However, this description
is not effective.

On a more fundamental level, our paper does not make Berg's effect invalid.
It suggests that it is a rather rare phenomenon, which could be observed when
crystals are near equilibrium with the environment.  The above result strongly
suggests that contrary to the claim
made in \cite{gr-berg}  solutions to \cite[eq. (2.2)]{gr-berg} in
general are singular. But influence of
the singularity on Berg's effect will be studied elsewhere.

\section{Preliminaries}

We first present facts on corner singularities of harmonic functions, then we will look at their level sets.

\subsection{On singular solutions to Laplace equation}

We introduce here the necessary notions and background material from
\cite{Grisvard}. We begin with the definition of the domain $\Omega$. First, we
set  $R_1=(-\rj,\rj)\times (-\rd, \rd)$, which will be the inner
rectangle. We take any $\lambda_{0}>1$ and we set
$\kf=\lambda_{0} R_1$. The domain of our harmonic functions is
$$
\Omega=\kf\setminus \overline{R_1} \equiv
\lambda_0 R_1 \setminus \overline{R_1}.
$$
The boundary of $\Omega$ consists of two connected components. For our purposes
we will break it down even further. We shall write,
\begin{eqnarray*}
 &&\g
=\partial R_1=\g_{1} \cup\g_{2} \cup\g_{3} \cup\g_{4}\cup S_{1} \cup
S_{2} \cup S_{3} \cup S_{4},\\
&&\gf =\partial \kf=\gf_{1}
\cup\gf_{2} \cup\gf_{3} \cup\gf_{4}\cup \widetilde{S}_{1} \cup
\widetilde{S}_{2} \cup \widetilde{S}_{3} \cup \widetilde{S}_{4} ,
\end{eqnarray*}
where $\g_{i}$, $\gf_{i}$  are sides of rectangles and
$S_{i}$, $\widetilde{S}_{i}$ are their
vertices, $i=1,\ldots,4$. To be precise, we set $\g_{1}=\{(t,\rd): \hd t\in
(-\rj,\rj) \}$, with the respective definition of $\widetilde\Gamma_1$ and
$\Gamma_2$, $\Gamma_3$, $\Gamma_4$ are the remaining sides of $R_1$ visited
counterclockwise. $\widetilde\Gamma_j$, $j=2,3,4$ are respectively defined for
$\kf$. We also set $S_{i}=\overline{\g_{i}} \cap \overline{\g_{i+1}}$, with the understanding
that $\Gamma_{4+1} = \Gamma_1$ and in the same manner we define
$\widetilde S_j$. The distance from vertex $S_i$ is $\rh_{i}$. We also set
$\rh=\min\limits_{i=1,...,4}{\rh_{i}}$.

For $i=2,4$, we set $\te_{i}$ to be the angle measured at $S_i$
from $\g_{i}$ to $\g_{i+1}$. At the same time for $i=1,3$, we set
$\te_{i}$ to be the angle measured from  $\g_{i+1}$ to $\g_{i}$. We
denote by $\eta_{i}=\eta_{i}(\rh_{i})$ a cutoff function equal  $1$ in a
neighborhood of $S_{i}$ with support in $B(S_{i},\min\{r_{1},r_{2}\} )$.
Furthermore, let
$\psi_{i}$ be a cutoff function equal to $1$ in the neighborhood of $\g_{i}$.

Before plunging into analysis of our problem, we state a more basic result.
\begin{proposition}\label{podst}
 Let us suppose that  $\omega \in (\pi, 2\pi)$, then we set $U=
\{(x,y)\in\mathbb{R}^{2}: \hd r\in (0,r_{0}), \hd \te \in (0, \omega) \}$, where $(r,\te)$ are polar coordinate in $\rr^{2}$. We
assume that $S \in L^{2}(U)$ is a solution to the following problem,
\eqq{ \left\{
\begin{array}{rcl}
\Delta S =0 & \m{ in } & U, \\
\frac{\partial S}{\partial n }=0  & \m{ for  } & \te =0,  \omega \\
\end{array}
\right. }{a-a}
Then, there exist constants  $c_{k,m}$ such that we have
\eqq{
S= c_{1,1}\sqrt{2/\omega } r^{-\po} \cos{\pot}+ c_{1,0} 1/\sqrt{\omega}\ln{r} +
c_{2,0}/\sqrt{\omega}+ \sum_{k=1}^{\infty} c_{2,k} r^{\kpo} \cos{\kpot} .
}{aa-a}
Moreover, for $l=0,1,\dots$ we have
\eqq{\sum_{k \geq \frac{\omega l }{\pi}}^{\infty} c_{2, k } r^{\kpo} \cos{\kpot}
\in C^{l}(\overline{U}).  }{b-a}
\end{proposition}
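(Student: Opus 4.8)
The plan is to expand $S$ in a Fourier series in the angular variable $\theta$ and solve the resulting ODEs in $r$. First I would note that, since $S\in L^2(U)$ and $\Delta S=0$, interior elliptic regularity gives $S\in C^\infty(U)$, so for each fixed $r\in(0,r_0)$ the function $\theta\mapsto S(r,\theta)$ is smooth, and I may write
\eqq{S(r,\theta)=\sum_{k=0}^{\infty} a_k(r)\,\varphi_k(\theta),\qquad \varphi_0(\theta)=\tfrac{1}{\sqrt{\omega}},\quad \varphi_k(\theta)=\sqrt{2/\omega}\,\cos\tfrac{k\pi\theta}{\omega},}{e-plan1}
where $\{\varphi_k\}$ is the orthonormal basis of $L^2(0,\omega)$ consisting of Neumann eigenfunctions of $-d^2/d\theta^2$ on $(0,\omega)$; these are exactly the functions compatible with the boundary condition $\partial S/\partial n=0$ at $\theta=0,\omega$. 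Plugging into $\Delta S=r^{-2}(r(ru_r)_r+u_{\theta\theta})=0$ and projecting onto $\varphi_k$ gives the Euler equation $r^2 a_k''+r a_k'-(k\pi/\omega)^2 a_k=0$. For $k\ge 1$ the solutions are $a_k(r)=A_k r^{k\pi/\omega}+B_k r^{-k\pi/\omega}$, and for $k=0$ they are $a_0(r)=A_0+B_0\ln r$.

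The next step is to discard the solutions that are too singular at the origin to be square-integrable. Using polar coordinates, $\|r^{s}\varphi_k\|_{L^2(U)}^2$ is finite iff $2s+1>-1$, i.e. $s>-1$; hence every term $r^{-k\pi/\omega}$ with $k\ge 2$ must be absent, since then $k\pi/\omega>2\pi/\omega\geq 1$ would violate the integrability exponent (here $\omega<2\pi$ gives $\pi/\omega>1/2$, so $k=2$ already yields $-k\pi/\omega<-1$). The term $r^{-\pi/\omega}$ for $k=1$ is borderline: $-\pi/\omega\in(-1,-1/2)$ because $\omega\in(\pi,2\pi)$, so it \emph{is} in $L^2$ and must be retained — this is the genuinely singular term $c_{1,1}\sqrt{2/\omega}\,r^{-\pi/\omega}\cos(\pi\theta/\omega)$. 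Similarly $\ln r\in L^2$ near $0$, so the logarithmic term $c_{1,0}(1/\sqrt{\omega})\ln r$ survives; the bounded terms $c_{2,0}/\sqrt{\omega}$ and $c_{2,k}r^{k\pi/\omega}\cos(k\pi\theta/\omega)$, $k\ge1$, cause no trouble. Relabeling constants yields exactly \eqref{aa-a}.

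For the regularity claim \eqref{b-a}, I would estimate the tail $\sum_{k\ge \omega l/\pi} c_{2,k} r^{k\pi/\omega}\varphi_k(\theta)$ in $C^l(\overline U)$. The key point is that each summand is a harmonic polynomial-type term $r^{k\pi/\omega}\cos(k\pi\theta/\omega)$ whose exponent $k\pi/\omega\ge l$, so every derivative up to order $l$ is bounded on $\overline{U}$ by $C\, k^{l} r_0^{k\pi/\omega - l}$ times a constant; to get convergence of the differentiated series one uses that the coefficients $c_{2,k}$ decay — indeed, from $S\in L^2$ and the orthonormality of $\{\varphi_k\}$ together with the mean-value / interior estimates on a slightly smaller domain $\{r<r_0'\}$, one obtains $|c_{2,k}|\le C (r_0')^{-k\pi/\omega}$ for any $r_0'<r_0$, which beats the polynomial factor $k^l$ and the growth $r_0^{k\pi/\omega}$ once $r_0'$ is chosen between $r$ and $r_0$. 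Summing a convergent majorant of the form $\sum_k k^l (r/r_0')^{k\pi/\omega}$ then gives the $C^l(\overline U)$ bound, and in particular continuity of all derivatives up to the closed corner.

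The main obstacle I anticipate is the borderline bookkeeping at $k=0$ and $k=1$: one must be careful that $r^{-\pi/\omega}$ and $\ln r$ (and, strictly, also the combination arising when one is tempted to allow $r^{+\pi/\omega}\ln r$-type resonances — which do \emph{not} occur here since the indicial roots $\pm k\pi/\omega$ are distinct for $k\ge1$) are correctly identified as the only admissible singular modes, and that no $L^2$ solution has been silently dropped. A secondary technical point is justifying the term-by-term differentiation of the Fourier series up to the boundary $\theta=0,\omega$ and $r=r_0$, which is where the decay estimate on $c_{2,k}$ does the real work; establishing that decay rigorously from $S\in L^2(U)$ alone (rather than assuming more regularity) is the part that needs the most care.
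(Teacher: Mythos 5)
Your proposal follows essentially the same route as the paper: expansion in the Neumann cosine eigenfunctions $\varphi_k$, reduction to the Euler ODE $r^2a_k''+ra_k'-(k\pi/\omega)^2a_k=0$, elimination of the modes $r^{-k\pi/\omega}$ for $k\ge 2$ by the $L^2$ integrability criterion $2s+1>-1$, and the tail estimate $|c_{2,k}|\le C(\delta)\delta^{-k\pi/\omega}$ obtained from smoothness of $S$ away from the origin. The argument is correct (your bookkeeping at $k=0,1$ and the remark on the absence of $\ln r$ resonances for $k\ge1$ match the paper), and the slight looseness about uniformity of the majorant up to $r=r_0$ is present in the paper's own proof as well.
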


\begin{remark}
For  $\omega \in (0,\pi)$ the statement is changed by dropping the first term in right hand side of (\ref{aa-a}) and in (\ref{b-a}) we get $C^{l+1}(\overline{U})$.
\end{remark}

\begin{proof}
Function $S$ is smooth up to the boundary away from the origin, because it is
harmonic inside $U$ and can be harmonically continued across the boundary by
even reflection. Thus, we have to establish its asymptotic behavior near origin.
Without the loss of generality we may assume that $\| S \|_{L^{2}(U)}=1$. Then
we set $\fkt= \sqrt{\frac{2}{\omega}} \cos{\kpot} $ for $k=1,2, \dots $ and
$\varphi_{0}(\te)= \frac{1}{\sqrt{\omega}}$. Functions $\{ \fk
\}_{k=0}^{\infty}$ form an orthonormal basis of $L^{2}(0, \omega)$, thus
\eqq{
S(r, \te) = \sum_{k=0}^{\infty} w_{k}(r) \fkt,
}{ab-a}
for all $r \in (0,r_{0})$. At the same time this  series converges in  $L^{2}(0 ,
\omega)$. Its coefficients are given by the following formula,
\[
w_{k}(r)= \int_{0}^{\omega} S(r, \te) \fkt d\te.
\]
From (\ref{a-a}) we obtain an ODE for $w_{k}$:
\[
w_{k}''+\frac{w_{k}'}{r}- (\kpo)^{2}\frac{w_{k}}{r^{2}}=0, \hd k=0,1, \dots,
\]
in other words
\eqq{
w_{0}(r)= c_{1,0}\ln{r}  + c_{2,0}, \hd \hd
w_{k}(r)= c_{1, k }r^{-\kpo} + c_{2,k} r^{\kpo} \hd \m{ for }  k=1,2,\dots.
}{ac-a}
We shall show that
\eqq{c_{1,k}=0 \m{ for } k>1.}{d-a}
For this purpose we notice that
$\int_{0}^{r_{0}}
|w_{k}(r)|^{2}rdr= \int_{0}^{r_{0}} |\int_{0}^{\omega} S(r, \te ) \fkt d \te
|^{2} rdr \leq \| S \|^{2}_{L^{2}(U)}=1$. On the other hand
\[
\int_{0}^{r_{0}} |c_{1, k } r^{- \kpo}|^{2}rdr = |c_{1,k } |^{2} \frac{r^{2(1
-\kpo)}}{2(1- \kpo)} \Big|^{r_{0}}_{0}.
\]
This integral is finite, hence $c_{1, k }=0$ or $1- \kpo>0$, which
implies (\ref{d-a}). Thus, from (\ref{ab-a})-(\ref{d-a}), we infer
(\ref{aa-a}).

In order to see  (\ref{b-a}), we need estimates on coefficients
$c_{2,k}$ for $k>1$. For this purpose, we fix  $\delta \in (0,r_{0})$ and we set
$a_{k}\equiv w_{k}(\delta)= \int_{0}^{\omega} S(\delta, \te) \fkt d \te $. Then,
it is easy to see that $|a_{k}| \leq C(\delta)$, because $S$ is smooth away from
the origin. Then,  $w_{k}(\delta) = c_{2, k }\delta^{\kpo}$. Hence,
\eqq{|c_{2,k }|\leq C(\delta)\delta^{- \kpo}. }{ad-a}
In this way for $k \geq \omega l /\pi$, we obtain
\[
\| D^{l} (c_{2, k } r^{\kpo} \cos{\kpot} ) \|_{C(U)} \leq C(l)  k^{l}
|c_{2, k}|.
\]
We infer from (\ref{ad-a}) that the series $\sum_{k \geq \frac{\omega l
}{\pi}}^{\infty} c_{2, k } r^{\kpo} \cos{\kpot}$ converges in
$C^{l}(\overline{U}) $.
\end{proof}

We shall introduce a couple of 
functions, which are necessary in the description of singularities of solutions
to
(\ref{r1}). The first one is $\skk$.

\begin{definition}\label{def1}{\rm
 (Very weak solution $\skk$). Let $w\in\hj$ be a week
solution to the following  problem,
\[
\left\{
\begin{array}{ll}
\lap w = - \lap ( \sum\limits_{i=1}^{4}\eta_{i}
\varrho_{i}^{-\frac{2}{3}} \coi)& \hbox{in }\Omega,\\
\pna{w}=0 \qquad\qquad\hbox{and}&
\agf{w}=0. \\
\end{array}
\right.
\]
We notice that
$\lap ( \sum\limits_{i=1}^{4}\eta_{i}\varrho_{i}^{-\frac{2}{3}} \coi)\in \ld$.
We set
\begin{equation}\label{pr1}
\widetilde{{\widetilde{S}}}=w+\sum\limits_{i=1}^{4}\eta_{i}
\varrho_{i}^{-\frac{2}{3}} \coi ,
\end{equation}
hence, $0\not \equiv
\widetilde{{\widetilde{S}}} \in \ld$. We finally define }
\begin{equation}\label{pr1+1}
 \skk=\widetilde{{\widetilde{S}}}/ \| \widetilde{{\widetilde{S}}} \|_{\ld} \equiv c_0 \widetilde{{\widetilde{S}}}.
\end{equation}
\end{definition}

The basic properties of  $\skk$ are stated below.

\begin{corollary}\label{uwj} {\sl
 Function $\skk$, given by the above formula, is the only one, (up to the sign),
with the following properties,}
\eqq{ \left\{
\begin{array}{l}
\lap \skk = 0, \\
\pna{\skk}=0, \\
\agf{\skk}=0, \\
\end{array}
\right. }{d}
\eqq{ \| \skk \|_{\ld}=1,}{ddod}
\eqq{\skk(x,y)=\skk(-x,y)=\skk(x,-y)=\skk(-x,-y), }{e}
\eqq{ \skk \in \ld \setminus \hj.}{f}
\end{corollary}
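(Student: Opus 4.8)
The plan is to verify that the explicitly constructed function $\skk$ from Definition \ref{def1} satisfies all four listed properties, and then argue uniqueness separately. First I would check (\ref{d}): by construction $\widetilde{\widetilde{S}} = w + \sum_{i=1}^4 \eta_i \varrho_i^{-2/3}\coi$, and the defining equation for $w$ gives $\lap \widetilde{\widetilde{S}} = \lap w + \lap(\sum_i \eta_i \varrho_i^{-2/3}\coi) = 0$ in $\Omega$. For the boundary conditions, note that each $\eta_i \varrho_i^{-2/3}\coi$ is supported near the vertex $S_i$ of $\Gamma$ and is harmonic there with vanishing normal derivative on the two sides meeting at $S_i$ (this is the standard singular function for a $3\pi/2$ corner, cf.\ Proposition \ref{podst}); it also vanishes identically in a neighborhood of $\gf$. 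Hence $\pna{(\sum_i \eta_i \varrho_i^{-2/3}\coi)}=0$ and $\agf{(\sum_i \eta_i \varrho_i^{-2/3}\coi)}=0$, so combined with $\pna{w}=0$, $\agf{w}=0$ we get $\pna{\skk}=0$ and $\agf{\skk}=0$. Property (\ref{ddod}) is immediate from the normalization (\ref{pr1+1}).

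Next I would establish the symmetry (\ref{e}). The key observation is that the domain $\Omega = \lambda_0 R_1 \setminus \overline{R_1}$ is invariant under the reflections $(x,y)\mapsto(-x,y)$ and $(x,y)\mapsto(x,-y)$, and the collection of source terms $\sum_i \eta_i \varrho_i^{-2/3}\coi$ is invariant under these reflections as well, provided the cutoffs $\eta_i$ are chosen symmetrically (which we may do, and should state). Therefore if $w(x,y)$ solves the defining problem for $\widetilde{\widetilde{S}}$, so does $w(-x,y)$ and $w(x,-y)$; by the uniqueness of the weak solution $w\in\hj$ to that (coercive, well-posed) problem, $w$ is symmetric, hence so is $\widetilde{\widetilde{S}}$ and thus $\skk$. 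Property (\ref{f}), that $\skk \in \ld\setminus\hj$: membership in $\ld$ follows since $w\in\hj\subset\ld$ and each $\eta_i\varrho_i^{-2/3}\coi\in\ld$ (the exponent $-2/3 > -1$ makes $\varrho_i^{-2/3}$ square-integrable in the plane near $S_i$). To see $\skk\notin\hj$, observe that $\varrho_i^{-2/3}\coi \notin \hj$ near each vertex (its gradient behaves like $\varrho_i^{-5/3}$, not square-integrable), and since $w\in\hj$ and the four singular pieces have disjoint supports, the sum $w + \sum_i \eta_i\varrho_i^{-2/3}\coi$ cannot lie in $\hj$.

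The main obstacle is uniqueness: showing that any $\skk$ satisfying (\ref{d})--(\ref{f}) coincides with the constructed one up to sign. The strategy is to invoke the Grisvard-type decomposition: any function annihilated by $\lap$ with the given homogeneous boundary data, lying in $\ld\setminus\hj$, must have a prescribed singular expansion at each vertex of $\Gamma$ — near $S_i$ it is, modulo an $\hj$ (indeed $C^1$) remainder, a scalar multiple $c_i$ of $\varrho_i^{-2/3}\coi$ (the unique $L^2$ singular exponent at a $3\pi/2$ corner with Neumann data that is not in $H^1$; here the logarithmic and $\varrho^{-2/3}$ regular-part terms of Proposition \ref{podst} are excluded by the $\ld$-but-not-$\hj$ constraint forcing exactly the one leading singular mode). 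The symmetry (\ref{e}) forces all four coefficients $c_i$ to have equal absolute value, so after subtracting $c\sum_i\eta_i\varrho_i^{-2/3}\coi$ for the appropriate $c$ we land in a function in $\hj$ that is harmonic with zero Neumann data on $\Gamma$ and zero trace on $\gf$; such a function is zero by the standard energy estimate. Hence the original function equals $c\,\widetilde{\widetilde S}$, and normalization (\ref{ddod}) pins $|c|$, giving uniqueness up to sign. I would flag that the careful point is justifying that no logarithmic term appears — this is exactly where the hypothesis $\skk\in\ld$ (not merely locally) together with $\skk\notin\hj$ is used to select precisely the $\varrho^{-2/3}$ singularity and nothing weaker or stronger.
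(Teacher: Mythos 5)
Your verification of properties (\ref{d})--(\ref{f}) for the constructed function is fine and in fact more explicit than the paper, which essentially reads them off from Definition \ref{def1}. The genuine gap is in the uniqueness step. The paper does not rederive the structure of the space $\mathcal{V}$ of $\ld$ solutions of the homogeneous problem (\ref{d}); it simply quotes \cite[Theorem~2]{K1} and \cite[Corollary~6]{K}, which assert that $\mathcal{V}$ is exactly four-dimensional, spanned by one singular function per reentrant corner with leading part $\rhi^{-2/3}\coi$ and no logarithm. You attempt to reprove this via the local expansion of Proposition \ref{podst}, and the critical point --- which you correctly flag --- is why the logarithmic mode $c_{1,0}\ln\rhi$ permitted by (\ref{aa-a}) cannot occur. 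Your stated reason, that it is ``excluded by the $\ld$-but-not-$\hj$ constraint,'' is false: $|\nabla\ln\rhi|=\rhi^{-1}$, so $\int\rhi^{-2}\,\rhi\,d\rhi$ diverges and $\ln\rhi$ lies in $\ld\setminus\hj$ locally near $S_i$, exactly like $\rhi^{-2/3}\coi$. Hence a global harmonic function with logarithmic corner singularities would be perfectly compatible with (\ref{d}) and (\ref{f}), and your argument cannot rule it out. Excluding the log (equivalently, showing $\dim\mathcal{V}=4$ rather than anything up to $8$) is precisely the content of the index/duality computations in \cite{K1}, \cite{K}; a naive flux balance over $\partial\Omega$ yields only one linear relation tying the four log coefficients to the flux through $\gf$, which does not force them to vanish.

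A smaller imprecision: you say the symmetries force the four coefficients $c_i$ to have ``equal absolute value.'' You need them literally equal (with the paper's conventions for the angles $\te_i$, each reflection carries the local singular function at $S_i$ onto the one at $S_j$), since otherwise subtracting a single multiple of $\sum_i\eta_i\rhi^{-2/3}\coi$ would not remove all four singularities. This is easily repaired, unlike the logarithm issue.
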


\begin{proof}
We claim that $\skk$ and  $-\skk$ are the only functions
satisfying
(\ref{d})-(\ref{f}). Indeed, from  \cite[Theorem~2]{K1} and
\cite[Corollary~6]{K}, we deduce that, $\mathcal{V}$, the space of functions
which
satisfy (\ref{d}) and (\ref{f}) is spanned by four linearly independent
functions. Each of them corresponds to  one non convex corner of $\Omega$.
Symmetries (\ref{e}) reduce the dimension of $\mathcal{V}$ to one and from
(\ref{ddod}) we get the claim.
\end{proof}

\vd

We define the second important function.

\begin{definition}\label{def2} {\rm (singular solution  $\sk$).
 Let  us suppose that $ \skk\in\ld $ is given by Definition \ref{def1}. Then
$\sk\in\hj$ is  a unique weak solution to the following equation,}
\[
\left\{
\begin{array}{ll}
\lap \sk = \skk & \hbox{\rm in }\Omega, \\
\pna{\sk}=0, &
\agf{\sk}=0. \\
\end{array}
\right.
\]
\end{definition}

Having functions $\sk$ and $\skk$ at hand, we can provide a description of
singular solutions to (\ref{r1}). In order to do this we introduce an
auxiliary function
\[
f=-a (y-\rd)\psi_{1} +b (x+\rj)\psi_{2} +a (y+\rd)\psi_{3}
-b (x-\rj)\psi_{4}\in C^{\infty}(\Omega),
\]
where $\psi_{i}$ are cut off functions equal to one on some neighborhood of $\g_{i}$ and vanishing \m{on $\gf$.}

\begin{lemma}\label{str1}
 Let us suppose that $u \in \hj$ is a unique weak solution to (\ref{r1}). Then  $u$  has the following form,
\[
u=
u_{r}+(c_{a}+c_{b})\sk,\qquad u_{r}\in
C^{1}(\overline{\Omega}),
\]
where $c_{a}+ c_{b}=-\int\limits_{\Omega} \skk \lap f$.
\end{lemma}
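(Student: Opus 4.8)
The plan is to reduce the Neumann problem (\ref{r1}) to a problem with zero Neumann data plus an explicit smooth correction, and then apply the already-established structure of the singular space. First I would verify that the auxiliary function $f$ has the right trace behaviour: on each side $\g_i$ the factor $(y\mp\rd)$ or $(x\pm\rj)$ vanishes, so that $\pn{f}$ equals the prescribed constant $u_n$ there (up to sign conventions built into the definition of $f$), while on $\gf$ all the cutoffs $\psi_i$ vanish, so $f$ and $\pn{f}$ both vanish on the outer boundary. Consequently $v:=u-f$ is the weak solution of $\lap v=-\lap f$ in $\Omega$, $\pna{v}=0$, $\agf{v}=0$, with $\lap f\in C^\infty$ having support away from all vertices $S_i$ (since the $\psi_i$ are supported near the sides, not the corners — this point should be checked, possibly shrinking the supports of the $\psi_i$ if necessary).

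Next I would invoke the decomposition of solutions to the homogeneous-Neumann problem with an $L^2$ right-hand side. By the same results cited in the proof of Corollary~\ref{uwj} (i.e. \cite[Theorem~2]{K1}, \cite[Corollary~6]{K}), together with the symmetrisation already carried out there, the solution $v$ admits a splitting $v=v_r+c\,\sk$ with $v_r\in C^1(\overline\Omega)$ and a single scalar coefficient $c$; here $\sk$ is the function from Definition~\ref{def2}, which solves $\lap\sk=\skk$ with zero Neumann and zero outer Dirichlet data. Then $u=f+v_r+c\,\sk$, and since $f\in C^\infty(\Omega)$ and is in fact smooth up to $\overline\Omega$ on the relevant pieces (it is a polynomial times a smooth cutoff, vanishing near $\gf$), the regular part is $u_r:=f+v_r\in C^1(\overline\Omega)$, giving the asserted form $u=u_r+c\,\sk$.

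It remains to identify the coefficient $c$ with $-\int_\Omega \skk\,\lap f$, and this is the step I expect to be the main obstacle, since it is where the "integral formula" content of the lemma really lies. The idea is a duality/Green's-formula argument pairing $v$ against the very weak solution $\skk$. Formally, since $\lap v=-\lap f$ and $\lap\sk=\skk$, one writes
\[
-\int_\Omega \skk\,\lap f = \int_\Omega \skk\,\lap v = \int_\Omega (\lap\sk)\,v + (\text{boundary terms}),
\]
and then uses $\lap\sk=\skk$ together with $\lap\skk=0$ to collapse the right-hand side; the boundary terms vanish because of the matched homogeneous conditions $\pna{v}=\pna{\sk}=0$ and $\agf{v}=\agf{\sk}=0$, and because $\lap f$ is supported away from the vertices, so all the integrations by parts are legitimate despite $\skk\notin\hj$. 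Carrying this out carefully — i.e. justifying each integration by parts near the non-convex corners, where $\skk$ and $\sk$ are singular — will require localising with the cutoffs $\eta_i$, handling the singular model terms $\eta_i\rho_i^{-2/3}\cos\frac23\theta_i$ explicitly, and checking that the orthogonality relations built into Definition~\ref{def1} (namely that $\widetilde{\widetilde S}$ is the normalisation of $w+\sum\eta_i\rho_i^{-2/3}\cos\frac23\theta_i$) are exactly what makes the pairing pick out the coefficient $c$ and nothing else. Once the boundary and corner contributions are shown to cancel, the formula $c=-\int_\Omega\skk\,\lap f$ follows, and writing $c=c_a+c_b$ with the obvious split of $f=f_a+f_b$ into the $a$-part and the $b$-part finishes the proof.
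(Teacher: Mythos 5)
Your setup coincides with the paper's: you subtract the auxiliary function $f$, pass to $v=u-f$ solving $\lap v=-\lap f$ with homogeneous mixed conditions, quote the decomposition $v=v_r+c\,\sk$ from \cite{K1}, and recover $u_r=v_r+f\in C^1(\overline\Omega)$ (the paper gets $v_r\in H^2$ from \cite{K1} and upgrades to $C^1$ via Proposition~\ref{podst}; your boundary checks for $f$ are fine). The problem is in the step you yourself flag as the main obstacle, the identification $c=-\int_\Omega\skk\,\lap f$, and there the argument as you sketch it would fail. Your displayed identity $\int_\Omega\skk\,\lap v=\int_\Omega(\lap\sk)\,v+(\hbox{boundary terms})$ is not an instance of Green's formula (Green pairs $\skk$ with $v$, giving $\int_\Omega v\,\lap\skk$ on the right, which is $0$). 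Worse, your guiding principle --- that the matched homogeneous conditions make ``all the integrations by parts legitimate'' and that ``the boundary and corner contributions cancel'' --- proves too much: if Green's identity held for the pair $(\skk,v)$ with all boundary terms vanishing, you would conclude $-\int_\Omega\skk\,\lap f=\int_\Omega\skk\,\lap v=\int_\Omega v\,\lap\skk=0$, i.e.\ $c=0$ identically, contradicting the whole point of the lemma. The mechanism that extracts $c$ is precisely the \emph{failure} of Green's identity at the non-convex corners: the interaction of the $\rho_i^{-2/3}$ singularity of $\skk$ with the $\rho_i^{2/3}$ singularity of $\sk$ produces a nonzero corner defect, and it must not cancel.

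The clean route, which is what the paper does, avoids any integration by parts against $\skk$ on the singular part. From \cite{K1} one has not only $v=v_r+c\,\sk$ with $v_r\in H^2$ but also the orthogonality relation $\int_\Omega\skk\,\lap v_r=0$ (this is where the duality with the very weak solution is hidden, and it is legitimate because $v_r$ is in $H^2$). Since $\lap\sk=\skk$ by Definition~\ref{def2}, one gets $-\lap f=\lap v=\lap v_r+c\,\skk$; multiplying by $\skk$, integrating, and using the orthogonality together with the normalisation $\|\skk\|_{\ld}=1$ from (\ref{ddod}) yields $c=-\int_\Omega\skk\,\lap f$ in one line. If you insist on proving the orthogonality yourself rather than citing it, you must run Green's identity only for the pair $(\skk,v_r)$ on $\Omega$ minus small neighbourhoods of the $S_i$ and show the arc integrals tend to zero using $v_r\in H^2$ and $|\skk|\le C\rho^{-2/3}$, $|\nabla\skk|\le C\rho^{-5/3}$; the analogous computation for the pair $(\skk,\sk)$ does \emph{not} give zero, and that is exactly the point.
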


\begin{proof}  Such a decomposition is a general fact, see
\cite[Theorem 1]{K1}. Now, the point is to calculate $c_{a}+c_{b}$. Obviously, $f$ satisfies boundary conditions (\ref{r1}${}_{2,3}$). Now, let
$v\in \hj$ be a weak solution to the problem
\[
\left\{
\begin{array}{ll}
\lap v = -\lap f &\mbox{in }\Omega,\\
\pna{v}=0, &
\agf{v}=0. \\
\end{array}
\right.
\]
According to \cite[Theorem 1]{K1} and its proof $v=
v_{r}+c\sk$, where $v_{r}\in H^{2}(\Omega)$  and
$\int_\Omega \Delta v_r \skk =0,$
where $\skk$ satisfies (\ref{d})-(\ref{f}). Then
it is easy to see that
$c=-\int\limits_{\Omega} \skk \lap
f$.

\no From the uniqueness of weak solutions we get $u=v+f$, so $u=v_{r}+f+c\sk$,
where $v_{r}+f \in H^{2}(\Omega)$. From Proposition~\ref{podst} we deduce that
$v_{r}+f \in C^{1}(\overline{\Omega})$. Finally, we see that $ c_{a}+
c_{b}=-\int\limits_{\Omega} \skk \lap f$, hence  the proof is finished.

\end{proof}

We shall see that despite a seemingly arbitrary choice of $f$, the definition of
$c$ is universal.
\begin{proposition}{\rm Let us suppose that $f$ is given above and $\skk$ is as
in Definition \ref{def1}. Then,}
\eqq{ \int\limits_{\Omega} \skk \lap f = 2a \igj \skk+2b\igd \skk.
}{c}
\end{proposition}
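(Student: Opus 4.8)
The plan is to deduce the identity from the second Green formula for the pair $\skk,f$, exploiting that $f$ merely extends the Neumann data while $\skk$ carries homogeneous boundary conditions. First I would record three elementary facts about $f$. Since each $\psi_i$ vanishes on $\gf$, we have $f_{|\gf}=0$. Since $\psi_i\equiv 1$ near $\g_i$, computing $\pn{f}$ on each side $\g_i$ (the outward normal of $\Omega$ is vertical on $\g_1,\g_3$ and horizontal on $\g_2,\g_4$) gives $\pn{f}=u_n$ on $\g$, i.e.\ $\pn{f}=a$ on $\g_1\cup\g_3$ and $\pn{f}=b$ on $\g_2\cup\g_4$ --- this is the property already invoked in the proof of Lemma~\ref{str1}. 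Finally, each linear factor occurring in $f$ vanishes at both endpoints of its side, so $f(S_i)=0$; as $f$ is smooth up to $\overline\Omega$, this forces $f=O(\rh_i)$ and $|\nabla f|=O(1)$ near each $S_i$.

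The core step is the very weak Green identity
\[
\io\skk\,\lap f=\int_{\g}\skk\,\pn{f}\,ds.
\]
To prove it I would truncate: on $\Omega_{\ep}:=\Omega\setminus\bigcup_{i=1}^{4}\overline{B(S_i,\ep)}$ the function $\skk$ is smooth up to the boundary (harmonic, with homogeneous Neumann data on $\g$ and Dirichlet data on $\gf$, so it extends smoothly by reflection across the flat parts, exactly as in the proof of Proposition~\ref{podst}), hence the classical second Green formula applies and, since $\lap\skk=0$, yields $\int_{\Omega_{\ep}}\skk\,\lap f=\int_{\partial\Omega_{\ep}}(\skk\,\pn{f}-f\,\pn{\skk})\,ds$. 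The portion of $\partial\Omega_{\ep}$ on $\gf$ contributes $0$ because there both $\skk=0$ and $f=0$; the portion on $\g$ contributes $\int_{\g\setminus\bigcup B(S_i,\ep)}\skk\,\pn{f}\,ds$, since $\pna{\skk}=0$ annihilates the remaining term; and each arc $\partial B(S_i,\ep)\cap\Omega$ contributes a quantity of size $O(\ep^{1/3})$. This last bound is the crux: near $S_i$ one has $\skk=\cz\big(w+\rh_i^{-\frac23}\coi\big)$ with $w\in\hj$, so by the Grisvard-type expansion of $w$ (corner exponents $\ge\frac23$) the singular term dominates, giving $\skk=O(\rh_i^{-\frac23})$ and $|\nabla\skk|=O(\rh_i^{-\frac53})$; combined with $f=O(\rh_i)$, $|\nabla f|=O(1)$ and arc length $O(\ep)$, the two pieces of the arc integral are of order $\ep\cdot\ep\cdot\ep^{-\frac53}=\ep^{\frac13}$ and $\ep^{-\frac23}\cdot\ep=\ep^{\frac13}$. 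Letting $\ep\to0$ (the left side converges since $\skk\in\ld$ and $\lap f\in\ld$) gives the displayed identity.

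Inserting the values of $\pn{f}$ from the first step,
\[
\io\skk\,\lap f=a\Big(\igj\skk+\int_{\g_{3}}\skk\Big)+b\Big(\igd\skk+\int_{\g_{4}}\skk\Big),
\]
I would then use the symmetries (\ref{e}). The reflection $(x,y)\mapsto(x,-y)$ is an isometry sending $\g_1$ onto $\g_3$ and leaving $\skk$ invariant, hence $\int_{\g_{3}}\skk=\igj\skk$; similarly $(x,y)\mapsto(-x,y)$ gives $\int_{\g_{4}}\skk=\igd\skk$. Substituting, one obtains exactly $\io\skk\,\lap f=2a\igj\skk+2b\igd\skk$.

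The step I expect to be the main obstacle is making the truncation argument rigorous, i.e.\ controlling the corner contributions. It requires the sharp local behaviour $\skk=O(\rh_i^{-\frac23})$, $|\nabla\skk|=O(\rh_i^{-\frac53})$ near each $S_i$ --- which rests on Definition~\ref{def1} together with the regularity of the $\hj$-function $w$ (whose singular part has exponent $\ge\frac23$, hence is lower order there) --- and it uses essentially that $f$ vanishes at each vertex, so that the potentially dangerous term $\int_{\partial B(S_i,\ep)}f\,\pn{\skk}\,ds$ in fact tends to $0$ rather than diverging. The rest is routine bookkeeping.
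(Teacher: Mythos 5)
Your proof is correct and follows essentially the same route as the paper's: both apply Green's second identity on a truncated domain, use the corner bounds $|\skk|=O(\rh_i^{-2/3})$, $|\nabla\skk|=O(\rh_i^{-5/3})$, $|f|=O(\rh_i)$ coming from Proposition~\ref{podst} to dispose of the error terms and of $\int f\,\pn{\skk}$, and finish with the boundary values of $\pn{f}$ together with the symmetries (\ref{e}). The only, immaterial, difference is the truncation geometry: you excise small balls around the vertices $S_i$ and bound the arc contributions by $O(\ep^{1/3})$, whereas the paper offsets the inner rectangle by $\delta$ and passes to the limit on each translated side $\Gamma_i^{\delta}\to\Gamma_i$.
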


\begin{proof} The argument will be split in a number of steps.

\no {\it Step 1.} Let $\od=\kf\setminus
\overline{(-\rj-\delta,\rj+\delta)\times(-\rd-\delta,\rd+\delta)}$. The
regularity of $f$ and the boundedness of $\Omega$ imply that $ \skk \lap f \in
L^{1}(\Omega)$. Thus
\[
\lim_{\delta
\rightarrow 0 }\int\limits_{\od} \skk \lap f =\int\limits_{\Omega}
\skk \lap f.
\]
At the same time, $\skk$ is harmonic 
in $\od$, so we have
\[
\int\limits_{\od} \skk \lap f=
\int\limits_{\partial \od} \skk
\pn{f}-
\int\limits_{\partial \od} \pn{\skk} f.
\]
We split the boundary of
$(-\rj-\delta,\rj+\delta)\times(-\rd-\delta,\rd+\delta)$ exactly in the same
way as we did it earlier, so that we shall write $\partial \od =
\overline{\gd_{1}}\cup\overline{\gd_{2}}\cup\overline{\gd_{3}}\cup\overline{\gd_{4}}\cup \gf$.

\no{\it Step 2.} We will prove that
\eqq{\lim\limits_{\delta \rightarrow 0 } \int\limits_{\gdj } \skk
\pn{f}=\int\limits_{\gj } \skk \pn{f}=a\int\limits_{\gj } \skk.}{b}
From Proposition~\ref{podst} we deduce that  $|\skk|\leq
c_{0}\rh^{-\frac{2}{3}}$. Since by definition $ |\pn{f}|\le C$,
then we also have $|\skk\pn{f}|\leq c\rh^{-\frac{2}{3}}$, thus its integral over
any segment of the length $b$ is smaller than $6cb^{1/3}$. Therefore for  any
$\ep>0$  there exists $\ep_{1}>0$, such that for any  $\delta\in (0, \ep_{1})$
the following
estimate
\[
\Big|
\int\limits_{\gdj\setminus ([-\rj+\ep_{1},\rj-\ep_{1}]\times\{r_2+\delta\})}
\skk \pn{f}
\Big|+\Big|
\int\limits_{\gj\setminus ([-\rj+\ep_{1},\rj-\ep_{1}]\times\{r_2\}}
\skk \pn{f}
\Big| \leq
\frac{\ep}{2}.
\]
holds. Moreover, for fixed $\ep_{1}$, we have
\[
\skk(x, r_2+\delta)\rightarrow \skk(x, r_2),\qquad
x\in[-\rj+\ep_{1},\rj-\ep_{1}],
\]
as  $\delta$ converges to zero and the convergence is uniform, because $\skk$ is
smooth away from vertices. Then
\begin{eqnarray*}
\Big| \int\limits_{\gdj} \skk \pn{f} - \int\limits_{\gj} \skk \pn{f}
\Big|&\leq& \Big| \int\limits_{\gdj\setminus
([-\rj+\ep_{1},\rj-\ep_{1}]\times\{r_2+\delta\})} \skk \pn{f} \Big|+\Big|
\int\limits_{\gj\setminus ([-\rj+\ep_{1},\rj-\ep_{1}]\times\{r_2\})} \skk
\pn{f}
\Big|
\\
&+&\Big|
\int\limits_{\gdj \cap ([-\rj+\ep_{1},\rj-\ep_{1}]\times\{r_2+\delta\})}
\skk\pn{f} -
\int\limits_{\gj\cap  ([-\rj+\ep_{1},\rj-\ep_{1}]\times\{r_2\})} \skk
\pn{f} \Big| \rightarrow 0,
\end{eqnarray*}
when $\delta$ goes to  $0$, as a result (\ref{b}) holds.

The remaining cases of $\Gamma_{i}$ for $i=2,3,4$ are handled with in the same
way.

\no {\it Step 3.}
We claim that
\[
\lim_{\delta\rightarrow 0}\int\limits_{\Gamma^{\delta}_{i}} \pn{\skk} f  = 0.
\]
First, we will  notice that
 \[
\lim\limits_{\delta \rightarrow 0 }
\int\limits_{\Gamma^{\delta}_{i}} \pn{\skk} f  =
\int\limits_{\Gamma_{i}} \pn{\skk} f ,\qquad i=1,\ldots,4.
\]
By the definition of $f$, we get $|f|\leq c_{0}\rh$. On the other hand, using
Proposition~\ref{podst} we get $|\pn{\skk}|\leq c_{0} \rh^{-\frac{5}{3}}$,
hence $|\pn{\skk} f|\le c\rh^{-\frac{2}{3}}$. Therefore we may proceed as in
Step 2 and calculate the above limit. Finally, we see that $\pn{\skk}$ vanishes
on $\Gamma$, hence the
claim follows.

\no {\it Step 4.}
Integrals over
 $\gf$ vanish, because the support of $f$ does not intersect  $\gf$.
Considering the boundary values of  $\pn{f}$, we infer
(\ref{c}).
\end{proof}

\begin{corollary}
Let us suppose that $u \in \hj$ is a unique weak solution to (\ref{r1}). Then
\eqq{
u\in C^{1}(\overline{\Omega}) \iff a \igj \skk+b\igd \skk=0.
}{chgl}
\label{wnreg}
\end{corollary}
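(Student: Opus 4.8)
The plan is to combine the two results immediately preceding the statement: the decomposition of Lemma~\ref{str1} and the explicit evaluation of $\int_{\Omega}\skk\lap f$ given in Proposition~(\ref{c}). First I would invoke Lemma~\ref{str1} to write the unique weak solution as $u=u_{r}+(c_{a}+c_{b})\sk$ with $u_{r}\in C^{1}(\overline{\Omega})$ and $c_{a}+c_{b}=-\int_{\Omega}\skk\lap f$. Then the whole statement reduces to the equivalence ``$u\in C^{1}(\overline{\Omega})\iff c_{a}+c_{b}=0$'' together with the identity $c_{a}+c_{b}=-\big(2a\igj\skk+2b\igd\skk\big)$ coming from (\ref{c}); dividing the resulting condition by $2$ gives exactly (\ref{chgl}).

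The ``$\Leftarrow$'' implication is trivial: if $c_{a}+c_{b}=0$ then $u=u_{r}\in C^{1}(\overline{\Omega})$. For ``$\Rightarrow$'' I would argue by contradiction: assume $c_{a}+c_{b}\neq0$. Since $u_{r}\in C^{1}(\overline{\Omega})$, the function $u$ would be $C^{1}(\overline{\Omega})$ if and only if $\sk$ were, so it suffices to show $\sk\notin C^{1}(\overline{\Omega})$. This is the point where the ``singular'' character of $\sk$ (Definition~\ref{def2}) must be used. Near each of the four non-convex corners $S_{i}$ of $\Omega$ the interior angle is $3\pi/2$, so by Proposition~\ref{podst} applied to $\skk$ (which is harmonic there with vanishing Neumann data on $\g$) one has $\skk\sim c_{0}\rhi^{-2/3}\coi$; since $\lap\sk=\skk$, the corner expansion of $\sk$ necessarily contains a term of the form $\kappa_{i}\,\rhi^{2/3}\coi$ (this being the unique homogeneous solution in $\hj$ that cannot be absorbed into the $H^{2}$ remainder), and $\sk\in\hj\setminus H^{2}(\Omega)$ forces $\kappa_{i}\neq0$ for at least one $i$. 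A term $\rhi^{2/3}\coi$ has gradient of order $\rhi^{-1/3}$, hence is unbounded at $S_{i}$, so $\sk\notin C^{1}(\overline{\Omega})$, a contradiction. (Alternatively, one may quote directly from \cite[Theorem~1]{K1} that $\sk$ is not even in $H^{2}(\Omega)$ and combine this with elliptic regularity away from the corners to conclude $\sk\notin C^{1}(\overline{\Omega})$.)

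Finally I would substitute the formula of Proposition~(\ref{c}) into $c_{a}+c_{b}=-\int_{\Omega}\skk\lap f$ to obtain $c_{a}+c_{b}=-2\big(a\igj\skk+b\igd\skk\big)$, whence $c_{a}+c_{b}=0\iff a\igj\skk+b\igd\skk=0$, and together with the equivalence established above this proves (\ref{chgl}).

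The step I expect to be the main obstacle is verifying that $\sk\notin C^{1}(\overline{\Omega})$, i.e.\ that the coefficient of the genuinely singular corner profile in $\sk$ does not vanish at every corner. This is exactly the non-effectivity of the general Grisvard-type theory referred to in the introduction; here, however, it is handled cleanly because $\sk$ is \emph{defined} as the singular solution of \cite[Theorem~1]{K1}, so the non-vanishing of its singular coefficient is built in, and the symmetry (\ref{e}) of $\skk$ makes the four corner contributions tie together. Everything else is bookkeeping on top of Lemma~\ref{str1} and Proposition~(\ref{c}).
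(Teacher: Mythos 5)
Your proposal is correct and follows essentially the same route as the paper: decompose $u=u_{r}+(c_{a}+c_{b})\sk$ via Lemma~\ref{str1}, observe that $u\in C^{1}(\overline{\Omega})$ forces $c_{a}+c_{b}=0$ because $\sk\not\in C^{1}(\overline{\Omega})$, and then convert $c_{a}+c_{b}=-\int_{\Omega}\skk\lap f$ into the stated integral condition using (\ref{c}). The only difference is that you elaborate on why $\sk\not\in C^{1}(\overline{\Omega})$, which the paper simply asserts on the basis of \cite[Theorem~1]{K1}.
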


\begin{proof}
If $u\in C^{1}(\overline{\Omega}) $ is the weak solution of (\ref{r1}), then
necessarily $c_{a}+ c_{b}=0$, because $\sk \not \in C^{1}(\overline{\Omega})$.
Then, from (\ref{c}) we
get $a \igj \skk+b\igd \skk=0$. The other implication is obvious.
\end{proof}

\begin{remark}
We see that the issue of regularity of solution of problem (\ref{r1}) is reduced
to calculating integrals $\igj \skk$ and $ \igd \skk$. However, we can not do it
directly. This is the main obstacle, related to that function $\skk$ is not
given explicitly.
In our further analysis we concentrate only on these integrals. More precisely,
we will show that at least one function of 
$\skk_{|\g_{1}}$,
$\skk_{|\g_{2}}$ is positive or negative. Then, at least one integral $\igj
\skk$ or $ \igd \skk$ is non zero. This means that the set of $(a,b)\in
\rr^{2}$, for which the solution of (\ref{r1}) is regular, is just a straight
line.
\end{remark}

\subsection{Very weak solutions}

\begin{lemma}\label{step2}
There is  $\cU$, a neighborhood of vertices $S_{i}$ such that $\nabla \skk (x,y)\ne 0$ in $\cU$.
\end{lemma}
\begin{proof} In order to see this we recall the form of $\skk$, see (\ref{pr1}) and (\ref{pr1+1}). We
notice that in a sufficiently small neighborhood of vertices $S_{i}$ the term
 $\nabla (\rh^{-\frac{2}{3}}_{i} \coi) $ dominates $\nabla w$.
More precisely, from Proposition~\ref{podst}, we deduce that
$w=\sum\limits_{i=1}^{4}c\eta_{i}\rhi^{\dt}\coi+h$, where $h$ belongs to
 $C^{1}(\overline{\Omega})$. Therefore,  we conclude that
$\rh^{\frac{1}{3}}|\nabla
w|\leq c_{1}$, while $|\nabla (\rh^{-\frac{2}{3}}_{i}
\coi)|=\frac{2}{3}\rh_{i}^{-\frac{5}{3}}$. Then, by the triangle inequality
we have $|\nabla(\rh^{-\frac{2}{3}}_{i} \coi)|-|\nabla w|\leq
|\nabla \widetilde{\widetilde{S}}|$, as a result we see,
$\rhi^{\frac{1}{3}}|\nabla(\rh^{-\frac{2}{3}}_{i}
\coi)|-\rhi^{\frac{1}{3}}|\nabla w|\leq \rhi^{\frac{1}{3}}|\nabla
\widetilde{\widetilde{S}}|$. This implies that
 $\frac{2}{3}\rh_{i}^{-\frac{4}{3}}-c_{1}\leq \rhi^{\frac{1}{3}}|\nabla
\widetilde{\widetilde{S}}|$, and then
$\frac{2}{3}\rhi^{-\frac{1}{3}}(\rh_{i}^{-\frac{4}{3}}-c_{1})\leq
|\nabla \widetilde{\widetilde{S}}|$,  which means that for sufficiently small $\rh_{i}$
we have $\nabla \skk \ne 0$.
\end{proof}

\begin{lemma}\label{step6}
For each $k>0$ there is $M>0$ such that, if we define $\um$ by
$$
\um= \bigcup\limits_{i=1}^{4}\umi,
$$
where
\begin{equation}\label{rn27pol}
 \umi=\{(x,y)\in \Omega:\hd \rh_{i} \leq
M^{-\frac{3}{2}}|\coi|^{\frac{3}{2}} \},
\end{equation}
then
\eqq{|\skk|_{|\um}>k.}{g}
\end{lemma}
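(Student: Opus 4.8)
The plan is to exploit the explicit leading term of $\skk$ near each vertex. Recall from (\ref{pr1}), (\ref{pr1+1}) that $\skk = c_0\bigl(w+\sum_{j=1}^4\eta_j\rh_j^{-2/3}\coi\bigr)$ and, as established in the proof of Lemma~\ref{step2}, $w=\sum_{j=1}^4 c\,\eta_j\rh_j^{2/3}\coi + h$ with $h\in C^1(\overline\Omega)$; in particular $\rh_i^{2/3}|w|\le c_2$ on $\Omega$ for some constant $c_2$, since the terms $\eta_j\rh_j^{2/3}\coi$ are bounded and $h$ is bounded. Fix $i$ and work inside $\umi$, where $\eta_i\equiv 1$ provided $M$ is large (so that $\umi\subset B(S_i,\min\{r_1,r_2\})$ — this follows from (\ref{rn27pol}) once $M^{-3/2}<\min\{r_1,r_2\}$). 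There, by the triangle inequality,
\[
|\skk|\ \ge\ c_0\Bigl(\rh_i^{-2/3}|\coi| - |w|\Bigr)\ \ge\ c_0\Bigl(\rh_i^{-2/3}|\coi| - c_2\,\rh_i^{-2/3}\Bigr),
\]
so it suffices to make $\rh_i^{-2/3}|\coi|$ large. But on $\umi$ we have $\rh_i\le M^{-3/2}|\coi|^{3/2}$, hence $\rh_i^{-2/3}\ge M|\coi|^{-1}$, and therefore $\rh_i^{-2/3}|\coi|\ge M$. Consequently $|\skk|_{|\umi}\ge c_0(M-c_2)$, and choosing $M>c_2 + k/c_0$ (and $M$ large enough for the support condition above) gives $|\skk|_{|\umi}>k$. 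Taking the union over $i=1,\dots,4$ yields (\ref{g}).

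The only subtlety I anticipate is bookkeeping about the cutoff functions $\eta_i$: one must first enlarge $M$ so that $\umi$ lies in the region where $\eta_i\equiv 1$ and where the other cutoffs $\eta_j$ ($j\ne i$) and the contribution of $h$ are controlled by the single constant $c_2$ uniformly in $i$ (using the symmetry (\ref{e}), the four vertices are equivalent, so one constant works for all). Once that is arranged, the estimate is the short computation above; there is no real obstacle, since the defining inequality (\ref{rn27pol}) of $\umi$ was tailored precisely so that the dominant singular term $\rh_i^{-2/3}|\coi|$ is bounded below by $M$.
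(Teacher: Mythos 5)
Your proof follows the same route as the paper's: on $\umi$ the defining inequality (\ref{rn27pol}) gives $\rh_i^{-2/3}|\coi|\ge M$, the remainder $w$ is controlled by a constant, and the triangle inequality yields $|\skk|\ge c_0(M-m)>k$ for $M$ large; the cutoff bookkeeping you mention is exactly what the paper does by taking $M$ so large that $\umi\subseteq\{\eta_i=1\}$.

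One slip in the write-up should be fixed. The bound you state for $w$, namely $\rh_i^{2/3}|w|\le c_2$, i.e.\ $|w|\le c_2\,\rh_i^{-2/3}$, is too weak to justify the step from your displayed chain to ``$|\skk|_{|\umi}\ge c_0(M-c_2)$'': from $|\skk|\ge c_0\,\rh_i^{-2/3}\bigl(|\coi|-c_2\bigr)$ together with $|\coi|\ge M\rh_i^{2/3}$ one only obtains $|\skk|\ge c_0\bigl(M-c_2\,\rh_i^{-2/3}\bigr)$, and the right-hand side tends to $-\infty$ as $\rh_i\to 0$. What you actually need --- and what your own justification (``the terms $\eta_j\rh_j^{2/3}\cos\frac{2}{3}\te_j$ are bounded and $h$ is bounded'') already delivers --- is the constant bound $|w|\le c_2:=\max_{\overline\Omega}|w|$, valid since $w$ is continuous on $\overline\Omega$. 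With that, $|\skk|\ge c_0\bigl(\rh_i^{-2/3}|\coi|-c_2\bigr)\ge c_0(M-c_2)$ on $\umi$, and the rest of your argument coincides with the paper's proof. (The $\rh^{1/3}$-weighted bound is the one used for $\nabla w$ in Lemma~\ref{step2}, not for $w$ itself.)
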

\begin{proof}
Let us fix $k>0$. We set $\umi$ by formula (\ref{rn27pol}).
Obviously, function $|\rdti \coi|$ is bounded below by
$M$ in $\umi$. Since $w$ in the definition of $\skk$ is continuous in
$\overline{\Omega}$, then the number
$m=\max\limits_{\Omega}{|w|}$ is well-defined.
We recall the shorthand $c_{0}=\|
\widetilde{\widetilde{S}} \|_{\ld}^{-1}$. By the definition of $U^{M}_{i}$, on
the set $U^{M}_{i}\cap \{\eta_{i}(\rh_{i})=1 \}$ we have
\begin{eqnarray*}
c_{0} M&\leq &\big| c_{0}\rdti \coi \big|=\big|\skk
- c_{0}w\big| \leq  |\skk| + c_{0}m.\\
\end{eqnarray*}
In other words,
\[
c_{0}M-c_{0}m \leq |\skk| \mbox{ \hd on \hd } \umi,
\]
for $M$ so large that $U^{M}_{i}\subseteq \{\eta_{i}(\rh_{i})=1
\}$. Finally, choosing  a constant $M$ so big that the left-hand-side of the
above inequality is bigger than $k$, we get (\ref{g}).
\end{proof}


\begin{lemma}\label{h} 
For each $k>0$ there exists $\ep_{2}>0$  such that for  all  $\ep\in (0,\ep_{2})$ function
$\skk$ restricted to $\Omega\cap\partial B(S_{i}, \ep)\setminus \umi$ is
strictly decreasing with respect to the angle $\te_{i}$ for all
$i=1,\dots, 4.$
\end{lemma}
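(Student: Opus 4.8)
The plan is to work locally near each vertex $S_i$ using the expansion from Proposition~\ref{podst}. Near a non-convex corner of $\Omega$ the opening angle is $\omega = 3\pi/2$, so $\pi/\omega = 2/3$. Writing $\skk$ in the local polar coordinates $(\rh_i, \te_i)$ centered at $S_i$, we have from (\ref{pr1}), (\ref{pr1+1}) and Proposition~\ref{podst} that
\[
\skk = c_0 \rhi^{-2/3}\coi + c_0 w,\qquad w \in C^1(\overline\Omega),
\]
where in fact $w$ itself, by Proposition~\ref{podst} applied on a neighborhood of $S_i$, splits as a $C^1$ remainder plus possibly a logarithmic term and the constant term; but the key point is only that $\partial_{\te_i} w$ is controlled. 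Differentiating in the angular variable, $\partial_{\te_i}\skk = -\tfrac{2}{3} c_0 \rhi^{-2/3}\sin(\tfrac23\te_i) + c_0\,\partial_{\te_i} w$. On the circle $\rh_i = \ep$ the singular term has size $\sim \ep^{-2/3}$, while the remainder term is bounded by $C\ep$ (since $\partial_{\te_i} w = \rh_i\,(\text{tangential derivative})$ and $\nabla w$ is bounded). So for $\ep$ small the sign of $\partial_{\te_i}\skk$ is governed by $-\sin(\tfrac23\te_i)$, which is strictly negative for $\te_i\in(0,3\pi/2)$ away from the endpoints.

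The subtlety addressed by the hypothesis "outside $\umi$" is exactly the behavior as $\te_i$ approaches $0$ or $\omega$: there $\sin(\tfrac23\te_i)\to 0$ and $\coi = \cos(\tfrac23\te_i)\to \pm 1$, so the singular term's angular derivative degenerates while its own magnitude stays $\sim \ep^{-2/3}$. The definition of $\umi$ in (\ref{rn27pol}), $\rh_i \le M^{-3/2}|\coi|^{3/2}$, is designed so that on $\Omega\cap\partial B(S_i,\ep)\setminus\umi$ one has $\ep = \rh_i > M^{-3/2}|\coi|^{3/2}$, i.e. $|\coi| < M\ep^{2/3}$, forcing $\te_i$ to stay a definite distance from the endpoints $0,\omega$ in a way that shrinks with $\ep$. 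First I would verify that on this set $|\sin(\tfrac23\te_i)|$ is bounded below by a quantity of order $1$ uniformly (using $\sin^2 = 1 - \cos^2 \ge 1 - M^2\ep^{4/3}$, which is $\ge 1/2$ once $\ep$ is small depending on $M$). Then the singular-term contribution to $\partial_{\te_i}\skk$ is bounded below in absolute value by $\tfrac13 c_0 \ep^{-2/3}$, while the remainder contributes at most $C\ep$, and for $\ep < \ep_2$ the former strictly dominates, giving $\partial_{\te_i}\skk < 0$ throughout.

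One has to be slightly careful on two points. First, near $S_i$ the cutoff $\eta_i$ equals $1$, so the formula $\skk = c_0\rhi^{-2/3}\coi + c_0 w$ is exact there; I would fix at the outset a neighborhood of $S_i$ on which $\eta_i\equiv 1$ and on which $\eta_j\equiv 0$ for $j\ne i$, and take $\ep_2$ smaller than its radius, so that the cross terms from other vertices do not appear. Second, "strictly decreasing with respect to $\te_i$" should be read as: for fixed $\ep$, the function $\te_i\mapsto \skk(\ep,\te_i)$ restricted to the (relatively open) arc $\Omega\cap\partial B(S_i,\ep)\setminus\umi$ has strictly negative derivative; this follows from the pointwise sign of $\partial_{\te_i}\skk$ just established. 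The main obstacle is purely the bookkeeping of getting a \emph{uniform} lower bound on $|\sin(\tfrac23\te_i)|$ on the complement of $\umi$ and matching the two exponents ($\ep^{-2/3}$ from the singular part versus $\ep^{1}$ from the $C^1$ remainder) so that a single threshold $\ep_2$, depending on $k$ only through the $M = M(k)$ fixed in Lemma~\ref{step6}, works for all four vertices simultaneously; everything else is a direct consequence of Proposition~\ref{podst}.
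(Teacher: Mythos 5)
Your proposal follows essentially the same route as the paper's proof: the local decomposition $\skk=c_0\rhi^{-2/3}\coi+c_0w$ near $S_i$, differentiation with respect to $\tei$, and the observation that outside $\umi$ one has $|\coi|\le M\rhi^{2/3}$, hence $\sin(\tfrac{2}{3}\tei)\ge\sqrt{1-M^2\rhi^{4/3}}$, so the singular term of order $\ep^{-2/3}$ dominates the remainder and forces $\partial_{\tei}\skk<0$. The one inaccuracy is your claim that $\nabla w$ is bounded (making the remainder $O(\ep)$): near $S_i$ the function $w$ still contains a term $c\,\rhi^{2/3}\coi$, so only $\rhi^{1/3}|\nabla w|$ is bounded and the angular remainder is in fact only $O(\ep^{2/3})$ --- which is still negligible against $\ep^{-2/3}$, so the argument and conclusion stand exactly as in the paper.
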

\no The constant $M$ and sets $\umi$ are given by Lemma~\ref{step6}.
\begin{proof}
Actually, in a neighborhood of $S_{i}$ we have $\skk
= c_{0}\rdti \coi +c_{0}w $. Let us recall that Proposition~\ref{podst} implies
that
$\rh^{\frac{1}{3}}|\nabla w|$ is bounded in
$\overline{\Omega}$, hence the number $m_{2}=
\max\limits_{\overline{\Omega}}{\rh^{\frac{1}{3}}|\nabla w | }$ is well-defined.
We obviously have
\[
\rhi^{-\frac{2}{3}}\frac{\partial }{\partial \tei}\skk= -\frac{2}{3}
\cz \rhi^{-\frac{4}{3}} \sin{\dt \tei} + \cz
\rhi^{-\dt}\frac{\partial }{\partial \tei}w,
\]
where $\Big|\rhi^{-\dt}\frac{\partial }{\partial \tei}w\Big|
\leq \rh^{\frac{1}{3}} |\nabla w | \leq m_{2}$.  We recall  that according to
the definition of $\umi$, we have
$M\geq \rdti|\coi |$ in $\Omega\setminus \umi$. As a result, $\frac{2}{3}\tei
\in
(\arccos(M\rhi^{\dt}),\arccos(-M\rhi^{\dt}))$. Then, on this interval, we have
$\inf{\sin{\dt \tei}}=
\sin(\arccos(M\rhi^{\dt}))>0$. This implies that  in a neighborhood of
$S_{i}$ for points not belonging to  $\umi$, we have
\[
-\frac{\partial }{\partial \tei}\skk \geq \dt \cz \rdti
\sin(\arccos(M\rhi^{\dt})) - \rhi^{\dt} \cz m_{2}.
\]
Certainly, the right-hand side of the above inequality monotonically grows to
 $\infty$, when $\rhi $ tends to zero. Thus, we may take any positive $\ep$
smaller than
$\ep_{2}$ defined by the following inequality
$\dt \ep_{2}^{-\dt}
\sin(\arccos(M\ep_{2}^{\dt})) - \ep_{2}^{\dt}  m_{2}>0$.

\end{proof}

\no For $k\in \rr $, we denote by $\wf_{k}$ the level set, i.e.
\eqq{\wf_{k}= \{x \in \Omega: \hd \skk(x)=k \}. }{deflev}
The following Corollary describes the structure of level sets in a neighborhood
of $S_{i}$.

\begin{corollary}
For each vertex $S_{i}$, $i=1,\ldots,4$ and for each $k \in \rr$, there is
$\ep_{3}>0$ such that
\m{$\wf_{k}\cap B(S_{i},\ep_{3})$} is an analytic curve with one endpoint in
$S_{i}$.
Moreover, curve $\wf_{k}$ divides $\Omega \cap B(S_{i}, \ep_{3})$ into two
parts: on one of them $\skk>k$ and on the other $\skk<k$.
\label{wnsi}
\end{corollary}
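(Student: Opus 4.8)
The plan is to combine the two local results just established: Lemma~\ref{step2}, which gives a neighborhood $\cU$ of the vertices on which $\nabla\skk\neq 0$, and Lemma~\ref{h}, which gives, for the chosen value $k$, a radius $\ep_2$ so that on $\Omega\cap\partial B(S_i,\ep)\setminus\umi$ the function $\skk$ is strictly monotone in the angular variable $\te_i$. First I would fix $i$ and $k$ and choose $\ep_3>0$ small enough that $B(S_i,\ep_3)\cap\Omega\subseteq\cU$, that $\ep_3<\ep_2$, and that $B(S_i,\ep_3)$ contains none of the other vertices. On $B(S_i,\ep_3)\cap\Omega$ write $\skk$ in the polar coordinates $(\rhi,\tei)$ centered at $S_i$; since $\skk$ is harmonic and extends across the sides $\g_i,\g_{i+1}$ by even reflection (as in the proof of Proposition~\ref{podst}), it is real-analytic on $B(S_i,\ep_3)\cap\overline\Omega$ away from $S_i$, so $\wf_k\cap B(S_i,\ep_3)$ is locally an analytic $1$-manifold wherever $\nabla\skk\neq 0$, which on $\cU$ is everywhere; this already gives the analyticity claim by the implicit function theorem.

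Next I would argue that this analytic curve is in fact a single arc with one endpoint at $S_i$. The key point is that $\wf_k\cap B(S_i,\ep_3)$ meets each small circle $\partial B(S_i,\ep)\cap\Omega$ in at most one point: on the part of that circle lying in $\umi$ Lemma~\ref{step6} gives $|\skk|>k$ (taking $M$, hence the sets $\umi$, associated with the fixed $k$), so $\wf_k$ does not meet it there; on the complementary arc $\partial B(S_i,\ep)\cap\Omega\setminus\umi$, $\skk$ is strictly monotone in $\tei$ by Lemma~\ref{h}, so it attains the value $k$ at most once. Hence for each $\ep\in(0,\ep_3)$ the intersection $\wf_k\cap\partial B(S_i,\ep)$ is either empty or a single point $p(\ep)$; a standard argument (the level set is a closed analytic curve with $\nabla\skk\neq0$, so it has no self-intersections and cannot close up inside the punctured disc, and it cannot have an endpoint on $\partial B(S_i,\ep_3)\cap\g$ since $\pn\skk=0$ there forces $\skk$ to be locally monotone transverse to $\g$, nor float freely) forces this family of points to trace out one arc that limits to $S_i$ as $\ep\to0$. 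One should check the non-emptiness for small $\ep$: since $\rdti\coi\to+\infty$ along $\tei=0$ and the dominant singular term changes sign as $\tei$ crosses $3\pi/4$ (within the reentrant angle $\tei\in(0,3\pi/2)$), $\skk$ takes arbitrarily large positive and negative values on $\partial B(S_i,\ep)\cap\Omega$ for $\ep$ small, so by continuity it hits $k$; for $k$ not attained one shrinks $\ep_3$ accordingly, but in fact the two-valued-sign behavior shows $k$ is always attained once $\ep_3$ is small.

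Finally, the separation statement: the analytic arc $\wf_k\cap B(S_i,\ep_3)$, together with the corner point $S_i$, cuts the open sector $\Omega\cap B(S_i,\ep_3)$ (which is homeomorphic to a half-disc, hence simply connected) into exactly two connected open pieces. On each circular arc $\partial B(S_i,\ep)\cap\Omega$ the function $\skk$, being monotone off $\umi$ and of modulus $>k$ on $\umi$, is $>k$ on one subarc and $<k$ on the other of the unique crossing point $p(\ep)$; assembling over $\ep\in(0,\ep_3)$ identifies the two pieces as $\{\skk>k\}$ and $\{\skk<k\}$.

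The main obstacle I anticipate is the topological bookkeeping in the middle step: turning the "at most one point on each small circle'' fact into the rigorous statement that the level set is a single arc terminating at $S_i$ — one must rule out the level curve re-entering the disc, touching the slit sides $\g_i,\g_{i+1}$ away from $S_i$, or accumulating at $S_i$ along more than one direction. The reflection-symmetry of the even extension across $\g_i$ and $\g_{i+1}$, together with $\pn\skk=0$ on those sides (so the level curve meets $\g$ only where $\skk=k$ as an isolated transverse event, which near $S_i$ is excluded by monotonicity in $\tei$), should close these gaps cleanly.
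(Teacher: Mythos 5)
Your proposal is correct and follows essentially the same route as the paper: exactly one intersection point with each small circle $\partial B(S_i,\ep)\cap\Omega$ (at least one from the unbounded $\pm$ oscillation of the singular term in Proposition~\ref{podst}, at most one from Lemmas~\ref{step6} and~\ref{h}), then the implicit function theorem with Lemma~\ref{step2} to get an analytic arc ending at $S_i$. Your treatment of the separation statement via the sign of $\skk$ on each circular arc is a slightly more explicit version of the paper's terse remark about repeating the argument for two values of $k$, but it is the same idea.
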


\begin{proof}
For fixed $k\in \rr$ we consider the set $\wf_{k}$. Then, using
Proposition~\ref{podst} we get $\ep_{3}>0$ such that for $\ep\in(0,\ep_{3})$ the
following conditions
\[
\inf_{\Omega \cap \partial B(S_{i},\ep)} \skk< - k, \hd \hd  k< \sup_{\Omega \cap \partial B(S_{i},\ep)} \skk
\]
hold. Let $M$ and $\ep_{2}$ be given by Lemma~\ref{step6}-\ref{h}. Then we
deduce that for each $\ep \in (0, \min\{\ep_{2},\ep_{3}\})$ the set $\wf_{k}\cap
\partial B(S_{i}, \ep) $ consist of one point. Using implicit function theorem
and Lemma~\ref{step2} we conclude the first claim.

\no If we conduct  the same argument as above, for two different numbers $k$,
then we obtain the remaining part of the claim.
\end{proof}

\begin{lemma}
Let us suppose that $\skk$ is given by Corollary \ref{uwj}.  Then,  the set $\{p
\in \Omega: \nabla \skk
(p) = 0 \}$ is finite.

\label{critical}
\end{lemma}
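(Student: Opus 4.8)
The plan is to show that the zero set of $\nabla\skk$ has no accumulation point in $\Omega$, and since $\Omega$ is bounded this forces finiteness. The key structural fact is that $\skk$ is harmonic in $\Omega$, so $\partial_x\skk$ and $\partial_y\skk$ are both harmonic functions; equivalently, the complex-valued function $F = \partial_x\skk - i\,\partial_y\skk$ is holomorphic in the interior of $\Omega$ (identifying $\rr^2$ with $\mathbb{C}$). A point $p\in\Omega$ with $\nabla\skk(p)=0$ is precisely a zero of the holomorphic function $F$. Since $F$ is not identically zero — indeed $\skk$ is not constant, as it fails to be in $\hj$ by (\ref{f}) while constants lie in $\hj$ — the zeros of $F$ are isolated in the open set where $F$ is holomorphic. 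Hence the critical set has no accumulation point in the interior of $\Omega$.

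The remaining issue is accumulation at the boundary $\partial\Omega = \g\cup\gf$. I would dispose of this in two parts. Away from the eight vertices $S_i$ and $\widetilde S_i$, the function $\skk$ is harmonic up to the boundary (it extends harmonically across the flat pieces of $\g$ and $\gf$ by even reflection, since $\pna{\skk}=0$ on $\g$ and $\agf{\skk}=0$ on $\gf$, the latter giving an odd reflection making $\nabla\skk$ even — in either case $\skk$ continues to a harmonic function on a full neighborhood). So $F$ is holomorphic on a neighborhood of each such boundary point, and its zeros remain isolated there unless $F\equiv 0$, which we have excluded. Therefore no accumulation of critical points can occur along the open edges.

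Finally, near each vertex we invoke Lemma~\ref{step2}: there is a neighborhood $\cU$ of the vertices $S_i$ on which $\nabla\skk\ne 0$ — so there are no critical points at all in $\cU$, hence none accumulating at the $S_i$. The vertices $\widetilde S_i$ of the outer boundary $\gf$ are convex corners of $\Omega$ (interior angle $\pi/2<\pi$), so by Proposition~\ref{podst} and the Remark following it, $\skk$ is $C^1$ up to $\widetilde S_i$ with leading corner behavior governed by a $\cos(2k\theta)$-type expansion; the analogous dominance argument as in Lemma~\ref{step2} — or simply the holomorphic-extension argument after reflection — shows $\nabla\skk$ cannot vanish on a sequence converging to $\widetilde S_i$. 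Combining: the critical set is closed in $\overline\Omega$, has no accumulation point in the interior, none on the open edges, and none at any of the eight vertices; a closed subset of the compact set $\overline\Omega$ with no accumulation point is finite. The one step requiring genuine care is the behavior at the outer vertices $\widetilde S_i$, where one must check that the regular part of the corner expansion does not conspire to produce infinitely many zeros of the gradient accumulating at the corner — but this follows from real-analyticity of $\skk$ up to (and across, by reflection) the convex corner, reducing it again to isolation of zeros of a nonzero holomorphic function.
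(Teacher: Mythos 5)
Your proof is correct and follows essentially the same route as the paper: identify critical points with zeros of the holomorphic function $\skk_x - i\,\skk_y$, conclude isolation in the interior, extend by reflection across the flat boundary pieces, and invoke Lemma~\ref{step2} near the inner vertices $S_i$. Your only deviations are cosmetic improvements --- you work directly with the single-valued holomorphic derivative rather than splitting $\Omega$ into simply connected pieces to realize $\skk$ as a real part, and you treat the outer convex corners $\widetilde S_i$ explicitly (via repeated odd reflection at the right angle), a point the paper's proof passes over in silence.
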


\begin{proof}
 Indeed, $\skk $ is harmonic in simply connected
domains $\Omega_{\pm}=\Omega \cap \{\pm x>- \ep \}$, hence $\skk$ is a
real part of a holomorphic function $f_{\pm}$ in $\Omega_{\pm}$. Then,
the set $\{z=(x,y)\in \Omega_{\pm}: f_{\pm}'(z)=0  \}$  is isolated in
$\Omega_{\pm}$ and from equality $f'(z)=u_{x}(x,y)- i u_{y}(x,y)$ we
deduce that $\{p \in \Omega: \nabla \skk (p) = 0 \}$ is isolated in
$\Omega$. Suppose that this set is not finite. Then, there would  be a
sequence, $p_{n}\in \Omega$, such that $\nabla \skk (p_{n})=0$ and
necessarily $p_{n}\rightarrow p\in\partial \Omega$.

\no We can extend $f$ (respectively, $\skk$) across  flat parts of the
boundary to get a  holomorphic continuation of $f_{\pm}$
(respectively, harmonic  continuation of $\skk$). In this process we
rule out the possibility that $p\in\partial \Omega\setminus\{S_1, S_2, S_3,
S_4\}$. The proof is finished because from Lemma~\ref{step2} we get $\nabla \skk
\not =0$, in a neighborhood of $S_{i}$.
\end{proof}


Now, we will analyze  zero level sets.

\begin{lemma}
There are analytic curves $L_{k}\subseteq \Omega$ such that   $\wfz=
\bigcup_{k=1}^{N}L_{k}$. Moreover, for each $k$ the endpoints of $L_{k}$ belong
to $\partial \Omega$, i.e. $\partial L_{k} \in \g \cup \gf $.
\label{analytic}
\end{lemma}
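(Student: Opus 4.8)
The plan is to describe the zero level set $\wfz = \{x\in\Omega : \skk(x)=0\}$ as a one-dimensional submanifold of $\Omega$ away from finitely many bad points and then argue it has finitely many components, each of which is an analytic arc reaching the boundary. First I would invoke Lemma~\ref{critical}: the set $Z = \{p\in\Omega : \nabla\skk(p)=0\}$ is finite. On the open set $\Omega\setminus Z$ the implicit function theorem, applied to the real-analytic function $\skk$ (harmonic, hence real-analytic in $\Omega$), shows that $\wfz\setminus Z$ is a one-dimensional real-analytic embedded submanifold of $\Omega$. Near a critical point $p\in Z\cap\wfz$, since $\skk$ is the real part of a holomorphic $f$ with $f'(p)=0$ of some finite order $m\ge 1$, the level set $\{\re f = \re f(p)\}$ consists locally of $2(m{+}1)\ge 4$ analytic arcs emanating from $p$ (the standard local picture of the nodal set of a harmonic function at a critical point); in particular $\wfz$ is locally a finite union of analytic arcs through each such $p$.

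Next I would handle the boundary behavior. Along the flat portions of $\partial\Omega$ — i.e. $\g_i$ and $\gf_i$ — the function $\skk$ extends harmonically by even reflection (using $\pna{\skk}=0$ across $\g_i$) or by odd reflection (using $\agf{\skk}=0$ across $\gf_i$), so $\wfz$ meets these flat parts transversally-or-not but in any case as the nodal set of a real-analytic function on a neighborhood in $\bR^2$; this already rules out accumulation of components along the interior of the flat sides. Near the reentrant vertices $S_i$, Corollary~\ref{wnsi} (with $k=0$) tells us that in a small ball $B(S_i,\ep_3)$ the set $\wfz\cap B(S_i,\ep_3)$ is a single analytic arc with one endpoint at $S_i$. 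Near the convex vertices $\widetilde S_i$ of $\gf$, a similar local analysis (or the Remark after Proposition~\ref{podst}, giving the improved regularity for angles in $(0,\pi)$) shows $\wfz$ is locally a finite union of arcs. Combining the interior and boundary descriptions, $\wfz$ is a compact one-dimensional real-analytic set (with boundary on $\partial\Omega$) having at most finitely many singular points, namely $Z\cap\wfz$ together with the four reentrant vertices $S_i$ and possibly the convex vertices $\widetilde S_i$.

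It remains to deduce that $\wfz$ has finitely many components and that each component is an arc with both endpoints on $\partial\Omega$. Because $\wfz$ is relatively closed in $\overline\Omega$, compact, and locally a finite union of analytic arcs away from the finite singular set, a standard covering/compactness argument shows $\wfz$ is a finite graph: finitely many vertices (the singular points) joined by finitely many analytic edges, where edges are either closed loops or arcs terminating at vertices or at $\partial\Omega$. To finish I would rule out closed analytic loops $L\subseteq\Omega$ with $\skk\equiv 0$ on them: such a loop would bound a subdomain $\Omega'\subseteq\Omega$, and since $\skk$ is harmonic in $\Omega'$ with zero boundary values, the maximum principle would force $\skk\equiv 0$ in $\Omega'$, contradicting $\|\skk\|_{\ld}=1$ together with unique continuation (or simply $\skk\in\ld\setminus\hj$). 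One must check the loop actually bounds a region inside $\Omega$ — this is where one uses that $\Omega$ is a doubly-connected polygon, so a loop either bounds a disk in $\Omega$ (excluded as above) or separates $\g$ from $\gf$; in the latter case one still gets a subdomain on which $\skk$ is harmonic and zero on one boundary component, and by combining with $\pna{\skk}=0$ or $\agf{\skk}=0$ one again reaches $\skk\equiv 0$, a contradiction. Hence every edge of the graph is an arc whose endpoints lie on $\partial\Omega=\g\cup\gf$; relabelling the edges $L_1,\dots,L_N$ gives $\wfz=\bigcup_{k=1}^N L_k$ as claimed.

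The main obstacle I anticipate is the careful local analysis at the corners and at interior critical points — specifically, showing that at each singular point only finitely many analytic arcs of $\wfz$ meet, and that near the reentrant vertices the singular expansion $\skk = \cz\rdti\coi + \cz w$ with the dominant non-smooth term still produces an analytic (not merely continuous) nodal arc. Corollary~\ref{wnsi} already provides exactly this at the $S_i$, and Lemma~\ref{critical} controls the interior; the remaining work is the routine but slightly delicate graph-theoretic bookkeeping (compactness plus ruling out loops) to pass from "locally finitely many arcs" to "globally finitely many arcs reaching the boundary."
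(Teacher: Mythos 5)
Your argument is correct and its core coincides with the paper's: both start from Lemma~\ref{critical} to isolate the finitely many critical points on $\wfz$, apply the implicit function theorem off that set, and use the standard local picture of the nodal set of $\re f$ at a zero of $f'$ of finite order (the paper cites \cite[Theorem~1]{liming} for exactly the $2(m+1)$-arc configuration you describe). Where you genuinely diverge is the last step. The paper disposes of the claim that every curve reaches $\partial\Omega$ in one sentence by citing \cite[Theorem~3]{liming} on unique extension of the analytic arcs to the boundary, and it does not separately argue finiteness of $N$ or the absence of loops. You instead build this by hand: even/odd reflection across the flat parts of $\g$ and $\gf$ plus Corollary~\ref{wnsi} at the vertices $S_i$ to get local finiteness up to $\overline\Omega$, a compactness argument to obtain a finite graph, and the maximum principle (in both the contractible and the $\g$-separating case) to exclude closed loops. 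Your route is longer but self-contained and makes explicit two points the paper leaves implicit (finiteness of the number of components and the no-loop dichotomy in a doubly connected domain); the paper's route is shorter but transfers the burden to the cited results, which are stated for simply connected domains --- a point the paper itself only patches in a later draft of this lemma. The only place you should tighten the write-up is the local finiteness of endpoints along the open sides: an interior point of $\g_i$ or $\gf_i$ where more than one nodal arc of the reflected extension meets is a critical point of that extension, and these are isolated, which is what actually prevents accumulation of components along the sides.
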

\begin{proof}
From Lemma~\ref{critical}, we infer that there are finitely many points
$\{p_{m}\}_{m=1}^{m_{0}}$ such that $\nabla \skk (p_{m})=0$ and $p_{m} \in
\wfz$. Therefore, for any $\ep>0$ on  the set $\wfz \setminus
\bigcup_{m=1}^{m_{0}} B(p_{m},\ep) $ we have $\nabla \skk \not = 0$. Hence,
from implicit function theorem each point of its set belongs to some analytic
curve.

On the other hand, for $\ep$ small enough  $\wfz \cap B(p_{m}, \ep )$ is a
set of analytic curves which is analytically equivalent (see  \cite[Definition
2]{liming}) to $\{t e^{i\varphi_{l} }: \hd t\in (-1, 1), \hd l=1,..., l_{0}
\}$, where $\varphi_{l}= \frac{(l-1)\pi+ \frac{\pi}{2}}{l_{0}} $  \hd (see
\cite[Theorem~1]{liming}). This proves the first part of the claim.

Finally, according to  \cite[Theorem~3]{liming}, each analytic curve can be
uniquely extended to the boundary of the domain.
\end{proof}

\begin{remark}
In the above proof number $l_{0}$ is the order of zero of holomorphic function
$f(z)$ such that $f(p_{m})=0$ and $\re{f}= \skk$ in $B(p_{m}, \ep )$.
\label{zeros}
\end{remark}

\begin{lemma}
Let $L$ be a connected subset of $\wfz$  with two endpoints on  $\partial
\Omega$. Then  at least one of them is vertex $S_{i}$ for an $i$ in
the range $1,\ldots,4.$.
\label{endpoints}
\end{lemma}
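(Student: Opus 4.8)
The plan is to argue by contradiction: suppose $L$ is a connected component of $\wfz$ whose two endpoints $q_1, q_2$ both lie on $\partial\Omega\setminus\{S_1,S_2,S_3,S_4\}$, i.e. on the relatively open flat parts of $\Gamma$ or $\gf$. First I would use the boundary conditions on $\skk$ from Corollary~\ref{uwj}: on $\gf$ we have $\skk=0$, so near a point $q\in\gf$ the level set $\wfz$ cannot be a single transversal arc ending at $q$ — rather $\gf$ itself is part of the zero set, so $L$ cannot have an \emph{isolated} endpoint there unless $L$ meets $\gf$ tangentially or $q$ is a point where $\wfz$ branches away from $\gf$. On $\Gamma$ we have $\partial\skk/\partial n=0$; here the harmonic continuation by even reflection across $\Gamma$ (as used in Lemma~\ref{critical}) shows that a zero-level curve hitting $\Gamma$ at a non-vertex point $q$ with $\skk(q)=0$ and $\nabla\skk(q)=0$ after reflection, so $q$ is a critical point of the reflected harmonic function and the zero set there looks like $\ge 2$ arcs crossing (an even number of arcs, by the local structure in Lemma~\ref{analytic} and \cite{liming}); if instead $\nabla\skk(q)\ne 0$ then $\wfz$ is locally a single analytic arc transversal to $\Gamma$. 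In all non-vertex cases the local picture of $\wfz$ near $q$ is governed by the holomorphic-function structure, which is symmetric under reflection.

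The key step is then to exploit the symmetries~(\ref{e}) together with the domain geometry to force a contradiction. Using~(\ref{e}), $\wfz$ is invariant under the reflections $(x,y)\mapsto(-x,y)$, $(x,y)\mapsto(x,-y)$. I would consider the restriction of $\skk$ to the four ``half-diagonal'' segments and to the coordinate axes intersected with $\Omega$: by the symmetry $\skk(x,y)=\skk(-x,y)$, on the segment $\{x=0\}\cap\Omega$ we have $\partial_x\skk=0$, and combined with $\Delta\skk=0$ this constrains $\skk$ there. The real point is a counting/topological argument: a zero-level curve $L$ with both endpoints on flat boundary pieces, neither a vertex, separates $\Omega$ (or separates a boundary-adjacent region) into two open pieces, on one of which $\skk>0$ and on the other $\skk<0$ (using the two-sided crossing property, as in Corollary~\ref{wnsi}). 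But near each vertex $S_i$, Corollary~\ref{wnsi} gives a zero-level curve emanating from $S_i$ into $\Omega$, and Lemma~\ref{h} plus Lemma~\ref{step6} pin down the sign of $\skk$ on the two sides of $S_i$ (namely $\skk$ has a fixed, definite sign $\coi$-pattern, with $|\skk|$ large on $\umi$). One then tracks the sign of $\skk$ along $\partial\Omega$ as one goes around: on $\gf$ it is identically $0$, and along $\Gamma$ between consecutive vertices it changes sign in a prescribed way dictated by the four terms $\rho_i^{-2/3}\coi$. An $L$ with both endpoints interior to flat pieces would be a zero-level arc whose removal would be incompatible with this prescribed sign pattern — specifically, it would have to bound a region whose boundary sign pattern is inconsistent.

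I would carry out the steps as follows: (1) record the local structure of $\wfz$ at a non-vertex boundary point, distinguishing the $\gf$ case ($\skk\equiv 0$ on $\gf$, so $L$ would have to be contained in $\gf$ or branch at an isolated critical point of the reflected function) and the $\Gamma$ case (reflection, critical point counting); (2) show that on $\gf$, $L$ cannot have an endpoint because $\wfz\supseteq\gf$-type degeneracy forces $q$ to be a branch point with an even number of arcs, contradicting $q$ being an \emph{endpoint} of the single arc $L$ — unless $L\subseteq\gf$, but $L\subseteq\Omega$ is open in $\Omega$, contradiction; so both endpoints lie on $\Gamma$; (3) reduce to the case of both endpoints on $\Gamma$ and use the even-multiplicity of arcs at non-vertex zeros of $\skk$ on $\Gamma$ (via reflection) together with Lemma~\ref{analytic}'s global description $\wfz=\bigcup_{k=1}^N L_k$ to set up the separation/sign-counting argument; (4) derive the contradiction from the sign pattern of $\skk$ near the four vertices (Corollary~\ref{wnsi}, Lemma~\ref{h}) and along $\Gamma$.

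\textbf{Main obstacle.} The hard part will be step~(2)--(3): making precise the claim that at a non-vertex boundary point the zero set of $\skk$ cannot present a ``free'' endpoint — in other words, ruling out that $L$ simply runs into, say, $\gf$ transversally and stops there. This requires carefully combining the boundary condition ($\skk=0$ on $\gf$, $\partial_n\skk=0$ on $\Gamma$), the even-reflection harmonic continuation, and the local analytic-equivalence classification of \cite{liming} to see that an endpoint on a flat part forces either a contradiction with the reflection symmetry of the local zero set or forces $q$ to be a vertex. Everything else (the sign bookkeeping around the boundary, the separation argument) is then a finite combinatorial check using the already-established Corollary~\ref{wnsi} and the symmetries~(\ref{e}).
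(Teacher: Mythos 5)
Your proposal does not follow the paper's route, and its central reduction rests on a false local claim. In your step (2) you assert that a curve of $\wfz$ cannot have an endpoint at a non-vertex point $q\in\gf$, because the zero Dirichlet condition on $\gf$ would force $q$ to be a branch point with an even number of arcs, ``contradicting $q$ being an endpoint of $L$''. This is not a contradiction: at such a branch point one of the arcs entering $\Omega$ can perfectly well be the terminal arc of $L$, so $q$ is a legitimate endpoint. Indeed, the paper's own analysis (case (c) in the proof of Lemma~\ref{lemmain}, fig.~3) exhibits exactly the configuration you claim to exclude: four components of $\wfz$, each running from a vertex $S_{i}$ to a point of $\gf$. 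What must be excluded is not an endpoint on $\gf$ per se, but the configuration in which \emph{both} endpoints avoid the vertices; and that exclusion is global, not local. The paper achieves it by a case analysis in which $L$, together with its images under the reflections coming from (\ref{e}) and suitable boundary arcs, bounds a nonempty open subset of $\Omega$ on whose boundary either $\skk$ vanishes identically (pieces of $\gf$ and zero-level arcs), forcing $\skk\equiv 0$ by the maximum principle, or $\skk$ attains a nonzero extremum on some $\g_{i}$, where Hopf's lemma forces a nonzero normal derivative, contradicting $\pna{\skk}=0$.

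Your steps (3)--(4) gesture at a ``separation/sign-counting argument'' and an ``inconsistent boundary sign pattern'', but they never identify these two concrete mechanisms (maximum principle on a region with identically vanishing boundary data; Hopf lemma at a nonzero boundary extremum under the homogeneous Neumann condition), nor the essential device of reflecting $L$ across $\{x=0\}$ and $\{y=0\}$ to close up such a region when $L$ alone does not separate $\Omega$. The degenerate subcases the paper must treat separately ($A=A'$, $L=L'\subseteq\{x=0\}$, endpoints on the same, adjacent, or opposite sides $\g_{i}$, $\g_{j}$) each require their own short argument, and none of them follows from the sign-pattern heuristic as stated. As written, the proposal replaces the actual proof by an incorrect local lemma plus an unexecuted combinatorial claim, so there is a genuine gap.
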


\begin{proof}
Let us denote  two endpoints of $L$ by $A,B$, they belong to $\partial \Omega$.
We will show that $A$ or $B$ is a vertex  $S_{i}$. For this purpose we
have to exclude all other possibilities. These are:\\
1) $A,B\in \gf$;\\
2) $A\in \g_{i}$ and $B\in \gf$, $i=1,\ldots,4$;\\
3) $A \in \g_{i}$, $B\in \g_{j}$, $i,j=1,\ldots,4$.

We will study them one by one.

1) Let us suppose $A,B\in \gf$. In this case,  $L$ together with a part of $\gf$
bound a nonempty open subset of $\Omega$ and $\skk$ is equal to zero on its
boundary and is harmonic inside. Hence, $\skk \equiv 0$, which is impossible.

2) Let us assume now, that $i=1$, in the other cases we proceed similarly. We
denote the reflection of $A$ (respectively, $B$, $L$) with respect to $\{x=0 \}$
by $A'$ (respectively, $B'$, $L'$). We have to consider the following
subcases:

a)
$A\not = A'$. Then, $L$, $L'$, the part of $\g_{1}$ connecting $A$ and $A'$ and
the part of $\gf$ connecting $B$ and $B'$ bound a nonempty  open subset of
$\Omega$ where $\skk$ is harmonic. Thus, at least one of its  extremal value is
nonzero and necessarily it is located  on $\g_{1}$, because $\skk $ vanishes on
the other parts of the boundary of this set. However, by Hopf Lemma the
normal derivative is nonzero at  extremal points. This fact contradicts the
definition $\skk$.

b)
$A=A'$ and $B\not = B'$. Then, $L$, $L'$ and  the part of $\gf$, connecting $B$
and $B'$, bound a nonempty  open subset of $\Omega$, where $\skk$ is harmonic
and it vanishes on its boundary, but this is impossible.

c)
$A=A'$, $B=B'$ and $L\not = L'$. Then, $L$, $L'$  bound a nonempty  open subset
of $\Omega$, where $\skk$ is harmonic and it vanishes on its boundary,  this is
again impossible.

d)
$L=L'$, i.e. $L\subseteq \{x=0 \}$. By definition, $\skk$ is symmetric with
respect to  $\{y=0 \}$, thus   $\skk_{|\{x=0 \}}=0$. Then, $\skk_{|\{x>0\}}$ can
by uniquely extended to a harmonic function in $\Omega$ by the odd reflection.
On the other hand, $\skk$ is even with respect to $\{x=0\}$. Therefore,
$\skk_{|\{x<0\}}\equiv 0$,  which  is a contradiction.

3) When  $A \in \g_{i}$, $B\in \g_{j}$, $i,j=1,\ldots,4$, we again consider subcases:

a)
$i=j$. Then $L$ and segment $\overline{AB}\subseteq \g_{i}$ bound a nonempty
open subset  of $\Omega$, where $\skk$ is harmonic. Therefore, at least one of
its extremal value is nonzero and necessarily it is located  on $\g_{i}$. By
Hopf Lemma the normal derivative is nonzero at this extremal point. This
contradicts the definition $\skk$.

b)
$j=i+1$. If we denote the reflection of $L$ ($L'$ resp.)
with respect to $\{y=0 \}$ by $L''$ ($L'''$ resp.), then $L,L', L'', L'''$ bound
a neighborhood  of
vertices $\{ S_{l}, l=1,...,4\}$. Outside of this neighborhood $\skk$ is bounded
and harmonic and at least one of its  extremal value is nonzero and necessary it
is located on some $\g_{l}$, but by Hopf Lemma the normal derivative is nonzero
in the extremal point. This is a contradiction with the definition $\skk$.

c)
$j=i+2$. We argue as above.

After having considered all cases we reached the desired result.
\end{proof}

\begin{lemma}
$\nabla \skk (p)\not = 0$ for all $p\in \wfz$.
\label{nocritical}
\end{lemma}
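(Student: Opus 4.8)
The plan is to argue by contradiction, combining the previous three lemmas. Suppose there is a point $p\in\wfz$ with $\nabla\skk(p)=0$. By Lemma~\ref{step2}, $p$ cannot lie in the neighborhood $\cU$ of the vertices, so $p$ is an interior critical point away from the corners. By Remark~\ref{zeros}, near $p$ the zero set $\wfz$ is analytically equivalent to $l_0\geq 2$ analytic arcs through $p$ meeting at equal angles $\pi/l_0$; in particular, at least four distinct analytic branches of $\wfz$ emanate from $p$ (two lines, i.e. four half-branches, when $l_0=2$). I would then trace each of these branches: by Lemma~\ref{analytic} each branch extends to an analytic curve $L_k$ in $\wfz$ with both endpoints on $\partial\Omega$, and by Lemma~\ref{endpoints} each such $L_k$ has at least one endpoint equal to some vertex $S_i$.

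The key counting step is this: from $p$ there emanate at least four half-branches of $\wfz$. Follow each half-branch; it is a sub-arc of some connected component $L\subseteq\wfz$ whose two endpoints lie on $\partial\Omega$, and at least one endpoint is a vertex. I want to conclude that each vertex $S_i$ can be "used up" by only a bounded number of branches, so that too many branches forces a configuration that bounds a subdomain on which $\skk$ is harmonic and vanishes on the boundary (or has an extremum on a flat piece, contradicting Hopf and the Neumann condition), exactly as in the proof of Lemma~\ref{endpoints}. More concretely: by Corollary~\ref{wnsi}, in a small ball $B(S_i,\ep_3)$ the set $\wfz$ (which is $\wf_k$ with $k=0$) is a \emph{single} analytic arc with one endpoint at $S_i$. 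Hence at most one component of $\wfz$ can reach each vertex $S_i$, so at most four components of $\wfz$ touch the boundary at vertices at all. But the four (or more) half-branches issuing from $p$ must belong to components with a vertex endpoint; if two of these half-branches belong to the same component $L$, then $L$ is a closed loop based at $p$ (or a loop through $p$ and a vertex), and together with $L'$ or a boundary arc it encloses a subregion of $\Omega$ on whose boundary $\skk\equiv 0$ while $\skk$ is harmonic inside — impossible unless $\skk\equiv 0$. Running the same enclosure/Hopf arguments from the proof of Lemma~\ref{endpoints} on the regions cut out by the branches from $p$, I exhaust all possibilities and reach a contradiction.

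The cleanest route is probably the loop argument: a critical point $p$ of a harmonic function on $\wfz$ means that locally $\wfz$ looks like an $\times$ (or a higher-order star); since $\skk\not\equiv 0$ and $\skk$ is harmonic, no component of $\wfz$ can bound a subdomain of $\Omega$, yet a nontrivial critical point on a level curve of a harmonic function always produces either such a bounding loop or a crossing that, after reflection through the symmetry axes (using (\ref{e})) and the flat boundary pieces, produces one; the extremum on the enclosed region then lies on a flat part of $\g$, where $\pn{\skk}=0$, contradicting Hopf's lemma — or on $\gf$, where $\skk=0$, forcing $\skk\equiv 0$.

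The main obstacle I expect is the bookkeeping of the global branch structure: showing that the finitely many analytic arcs of $\wfz$, together with the parts of $\partial\Omega$ they join, genuinely enclose a subdomain in \emph{every} configuration arising from a crossing at $p$. This is essentially a planar topology / Jordan curve argument, and the delicate point is handling the reflections across $\{x=0\}$ and $\{y=0\}$ consistently (as in cases 2d and 3b of Lemma~\ref{endpoints}) so that the reflected curves $L', L'', L'''$ close up without accidentally coinciding with $L$ in a way that evades the contradiction. Once the enclosure is produced, the analytic input (harmonicity, Hopf's lemma, the homogeneous Neumann and Dirichlet conditions) finishes the argument immediately.
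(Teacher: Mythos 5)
Your outline is essentially the paper's argument: a critical point of $\skk$ on $\wfz$ is a bifurcation point lying on at least two of the analytic curves of Lemma~\ref{analytic}, each of which must have a vertex endpoint by Lemma~\ref{endpoints}, while Corollary~\ref{wnsi} caps each vertex at one arc of $\wfz$; the contradiction then comes from an enclosure plus the maximum principle. The one step you flag as your ``main obstacle'' --- the global bookkeeping --- is exactly where the paper inserts a short symmetry argument that you should adopt: the two curves $L_{k}$, $L_{k'}$ through $p_{0}$ have four endpoints $\{A,B,C,D\}$ on $\partial\Omega$, and since every connected two-endpoint piece of $L_{k}\cup L_{k'}$ already has at least one vertex endpoint, at most one of $A,B,C,D$ is not a vertex; the symmetries (\ref{e}) of $\skk$ then force \emph{all four} to be vertices. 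With Corollary~\ref{wnsi} this means $L_{k}$ and $L_{k'}$ join two disjoint pairs of vertices and cross at $p_{0}$, so $L_{k}\cup L_{k'}$ bounds a neighborhood of $\g$; on the complementary region $\skk$ is harmonic and vanishes on the entire boundary (on $\gf$ and on the curves), hence $\skk\equiv 0$ there, a contradiction. This single configuration argument replaces the case-by-case exhaustion you were anticipating.
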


\begin{proof}
Suppose that $\nabla \skk (p_{0}) = 0$ at $p_{0}\in \wfz$. Then, (see
\cite[Theorem~1]{liming} and  also Remark~\ref{zeros}) $p_{0}$ is bifurcation
point and it belongs to at least two analytic curves $L_{k}$, $L_{k'}$ given by
Lemma~\ref{analytic}. Hence, $\widetilde{L}\equiv L_{k}\cup L_{k'} $ is
connected with four endpoints $\{A,B,C,D \}\subseteq \partial \Omega$. If
$L\subseteq \widetilde{L}$ is connected with two endpoints, then from
Lemma~\ref{endpoints}, we get that at least one of them is in $\{S_{i}: \hd
i=1,...,4 \}$. Thus, we deduce that at most one of the endpoints $\{A,B,C,D \}$
is not in  $\{S_{i}: \hd i=1,...,4 \}$. But then from symmetries, we conclude
that all endpoints belong to $\{S_{i}: \hd i=1,...,4 \}$.

From Corollary~\ref{wnsi} we deduce that  $L_{k}$ and $L_{k'}$ connect two
different pairs of vertices  $\{S_{i}: \hd i=1,...,4 \}$ and $p_{0}\in L_{k}\cap
L_{k'}$. It means that $L_{k}\cup L_{k'}$ bound some neighborhood of $\g$ and
outside of it $\skk$ is harmonic and equal to zero on the boundary. This gives a
contradiction.

\end{proof}

\begin{corollary}
Each analytic curve $L_{k}$ from Lemma~\ref{analytic} has at least one endpoint
in the set $\{S_{i}: \hd i=1,...,4 \}$. Moreover, $\wf_{0}= L_{1}\cup L_{2}$ and
endpoints of $L_{i}$
are in vertices $S_{i}$, $i=1,...4$ or $\wf_{0}= L_{1}\cup L_{2}\cup L_{3}\cup L_{4}$ .
\label{corostr}
\end{corollary}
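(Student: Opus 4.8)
The plan is to combine the three preceding lemmas. First, Lemma~\ref{analytic} gives the decomposition $\wfz=\bigcup_{k=1}^{N}L_{k}$ into analytic curves whose endpoints lie on $\partial\Omega$, and Lemma~\ref{nocritical} tells us that $\nabla\skk$ is nowhere zero on $\wfz$. Consequently the curves $L_{k}$ are pairwise disjoint: if two of them met at a point $p_{0}$, that point would be a bifurcation point and hence a critical point of $\skk$ (cf.\ \cite[Theorem~1]{liming} and Remark~\ref{zeros}), contradicting Lemma~\ref{nocritical}. So the $L_{k}$ are disjoint simple analytic arcs, each with two endpoints in $\g\cup\gf$.

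Next I would invoke Lemma~\ref{endpoints}: for every such arc $L_{k}$, at least one of its two endpoints must be a vertex $S_{i}$. This already proves the first assertion. Since there are only four vertices $S_{1},\dots,S_{4}$, and a vertex can serve as an endpoint of only boundedly many arcs --- in fact, by Corollary~\ref{wnsi}, exactly one arc of $\wfz$ emanates from each $S_{i}$ (the zero level set near $S_{i}$ is a single analytic curve with one endpoint at $S_{i}$) --- the total number $N$ of arcs is controlled. Each vertex is the endpoint of exactly one arc, so counting endpoints-at-vertices gives at most four such incidences; since each arc uses at least one, we get $N\le 4$.

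To pin down the two possibilities I would use the $\mathbb{Z}_2\times\mathbb{Z}_2$ symmetry~(\ref{e}) of $\skk$, which maps $\wfz$ to itself and permutes the vertices $S_{1},\dots,S_{4}$ and hence permutes the arcs $L_{k}$. An arc with both endpoints at vertices connects two of the four $S_{i}$; an arc with exactly one endpoint at a vertex has its other endpoint on a flat side (on $\g$ or $\gf$). Suppose some arc $L$ has one endpoint at a vertex and the other on a flat side. Then its images under the reflections are three further arcs, and the symmetry analysis of Lemma~\ref{endpoints} (the subcases using Hopf's lemma) forces a contradiction unless all four arcs actually terminate at vertices --- exactly the reasoning already carried out in Lemma~\ref{nocritical}. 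Hence either every arc joins two vertices, in which case (since each vertex has exactly one emanating arc) there are precisely two arcs pairing up the four vertices, giving $\wfz=L_{1}\cup L_{2}$ with endpoints at the $S_{i}$; or no arc joins two vertices, in which case each of the four vertices spawns its own arc and $N=4$, giving $\wfz=L_{1}\cup L_{2}\cup L_{3}\cup L_{4}$.

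The main obstacle is the bookkeeping in the last step: one must be careful that the two global scenarios are genuinely exhaustive, i.e.\ that one cannot have, say, one arc joining two vertices together with two further arcs each having a single vertex endpoint. Ruling this out requires using both the exact count ``one arc per vertex'' from Corollary~\ref{wnsi} and the symmetry, since a mixed configuration would not be symmetric under the reflection group unless it degenerates into one of the two listed cases. Everything else is a direct assembly of the quoted lemmas.
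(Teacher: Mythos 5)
Your proposal is correct and follows essentially the same route as the paper: decompose $\wf_{0}$ via Lemma~\ref{analytic}, apply Lemma~\ref{endpoints} to get a vertex endpoint on each arc, and then use the symmetries of $\skk$ to reduce to the two listed configurations. In fact you are more careful than the paper's own two-line proof, since you make explicit the ingredients it leaves implicit — disjointness of the arcs via Lemma~\ref{nocritical}, the ``exactly one arc per vertex'' count from Corollary~\ref{wnsi}, and the symmetry-orbit argument that excludes a mixed configuration such as one vertex-to-vertex arc plus two single-vertex arcs.
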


\begin{proof}
If $p\in \wf_{0}$, then from Lemma~\ref{analytic} $p$ belongs to an analytic
curve $L_{k}$ with endpoints on $\partial \Omega$. Then, using
Lemma~\ref{endpoints}, we deduce that at least one endpoint of $L_{k}$ is
$S_{i}$, for an index $i=1,\ldots,4$. If the other endpoint is in some $S_{j}$
for $i \not = j$, then
$\wf_{0}$ is a sum of two analytic curves. If it is not a case, then
$\wf_{0}$ is a sum of four analytic curves.
\end{proof}

\begin{lemma}
Let us suppose that $\skk$ is given by definition \ref{def2}. We
set $\alpha=\sup\limits_{\gj}{\skk}$ and
$\beta=\inf\limits_{\g_{2}}{\skk}$. Then $\alpha \leq 0$ or $\beta \geq 0$.
\label{lemmain}
\end{lemma}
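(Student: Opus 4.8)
The statement asserts that we cannot simultaneously have $\alpha = \sup_{\g_1} \skk > 0$ and $\beta = \inf_{\g_2} \skk < 0$. The plan is to argue by contradiction: assume $\alpha > 0$ and $\beta < 0$. The idea is to locate a point on $\g_1$ where $\skk$ attains a positive value, a point on $\g_2$ where $\skk$ attains a negative value, and then to use the structure of the zero level set $\wf_0$ (established in Corollary~\ref{corostr}) together with the reflection symmetries (\ref{e}) to produce a subdomain on whose boundary $\skk$ vanishes while $\skk$ is harmonic inside — or alternatively a subdomain with an interior extremum on a flat piece of $\g$, contradicting the homogeneous Neumann condition $\pna{\skk}=0$ via the Hopf lemma. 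This is exactly the mechanism used repeatedly in Lemmas~\ref{endpoints} and \ref{nocritical}, so the proof should be a careful bookkeeping exercise built on those.

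**Using the level-set structure.**
First I would observe that since $\skk$ is continuous on $\overline\Omega$ away from the vertices and $\skk_{|\gf}=0$, the assumption $\alpha>0$ forces $\skk$ to take a strictly positive value somewhere on the interior of $\g_1$ (or approaching a vertex), and similarly $\beta<0$ forces a strictly negative value on $\g_2$. By the symmetries (\ref{e}), $\skk$ is even in both $x$ and $y$, so positive values on $\g_1$ are mirrored on $\g_3$ and negative values on $\g_2$ are mirrored on $\g_4$. Now consider a path inside $\overline\Omega$ from a point of $\g_1$ where $\skk>0$ to a point of $\g_2$ where $\skk<0$: by the intermediate value theorem it must cross $\wf_0$. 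Hence $\wf_0$ must ``separate'' the top side $\g_1$ from the right side $\g_2$ in $\Omega$. But by Corollary~\ref{corostr}, $\wf_0$ is a union of either two or four analytic arcs whose endpoints all lie among the four reentrant vertices $S_i$; in either configuration one can check, using the symmetry, that $\wf_0$ consists of arcs joining $S_i$ to $S_{i+2}$ (the ``diagonal'' case) or arcs joining adjacent vertices. In the diagonal case $\wf_0$ does not separate $\g_1$ from $\g_2$ at all, giving an immediate contradiction; in the adjacent case, the arcs of $\wf_0$ together with pieces of $\g$ bound a subregion $\Omega'\subset\Omega$ on whose boundary $\skk$ is either identically zero (forcing $\skk\equiv0$ by harmonicity) or attains a nonzero extremum on a flat portion of $\g$, contradicting $\pna{\skk}=0$ by Hopf's lemma.

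**The main obstacle.**
The delicate part is the case analysis for how $\wf_0$ sits relative to $\g_1$ and $\g_2$, because a priori there could be pieces of $\wf_0\cap\g$ that I have not accounted for — that is, $\skk$ could change sign along a single side $\g_i$ itself, so that the ``positive'' and ``negative'' regions near that side are intertwined with the level set in a complicated way. I would handle this by combining the local description near vertices from Corollary~\ref{wnsi} (which says each level set enters each $S_i$ as a single analytic arc splitting the corner neighborhood into a $\skk>k$ piece and a $\skk<k$ piece) with Lemma~\ref{nocritical} (no critical points on $\wf_0$), so that $\wf_0$ is genuinely a disjoint union of embedded arcs with well-defined sides. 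Then, tracking the sign of $\skk$ along $\partial\Omega$ — knowing $\skk\equiv0$ on $\gf$ and that near each $S_i$ the sign pattern is prescribed by which of $\g_{i-1},\g_i$ the arc enters from — pins down the topology of $\wf_0$ to the two configurations above. Once the configuration is fixed, producing the contradiction is routine: it is the same harmonic-function-with-zero-boundary-data or Hopf-lemma argument already deployed in the earlier lemmas, applied to the explicitly identified subregion $\Omega'$.
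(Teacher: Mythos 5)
Your overall strategy --- analyzing the topology of the zero level set $\wf_{0}$ and then exploiting the symmetries, the maximum principle and the Hopf lemma on the subregions it cuts out --- is exactly the strategy of the paper. However, there is a genuine gap in your case analysis, located precisely at the point that makes the conclusion of the lemma a disjunction rather than a conjunction. You assert that, by Corollary~\ref{corostr}, $\wf_{0}$ is a union of two or four analytic arcs \emph{whose endpoints all lie among the four vertices} $S_{i}$, and you then reduce to a ``diagonal'' configuration and an ``adjacent-vertices'' configuration. This misreads Corollary~\ref{corostr} (and Lemma~\ref{endpoints}): those results only guarantee that \emph{at least one} endpoint of each arc is a vertex $S_{i}$. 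The second endpoint of the arc emanating from $S_{1}$ may perfectly well lie on the outer boundary $\gf$, or on the open side $\g_{1}$, or on the open side $\g_{4}$; none of these configurations is excluded by the preceding lemmas, and the paper treats them as separate cases (c), (d), (e). Your ``diagonal'' case $S_{1}$--$S_{3}$ is, on the contrary, one of the configurations that \emph{is} eliminated: such an arc together with its reflections and $\gf$ would bound a region where $\skk$ is harmonic with zero boundary data, forcing $\skk\equiv 0$. So the dichotomy you reduce to does not match the actual list of possibilities.

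The omission matters because the cases you skip are exactly the ones in which only one of the two inequalities can be proved. When the arc from $S_{1}$ returns to $\g_{1}$, the complement of the four small regions bounded by the arcs and pieces of $\g_{1}$, $\g_{3}$ contains $\g_{2}$, and one gets $\beta\geq 0$, but nothing in that configuration prevents $\alpha>0$; symmetrically, when the arc ends on $\g_{4}$ one only obtains $\alpha\leq 0$. In your proof-by-contradiction framework these cases still yield a contradiction with the assumption ``$\alpha>0$ and $\beta<0$'', but they must first be identified and argued, and your sign-tracking paragraph does not do so: it asserts that the boundary sign pattern ``pins down the topology of $\wf_{0}$ to the two configurations above'', which is false. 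To repair the argument, enumerate the location of the second endpoint of $L_{1}$ as $S_{2}$, $S_{4}$, a point of $\gf$, a point of $\g_{1}$, or a point of $\g_{4}$, and in each of the last three cases run the maximum-principle/Hopf argument on the appropriate region to obtain at least one of the two inequalities.
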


\begin{proof}
We will analyze the structure of zero level set $\wf_{0}$. Let $L_{1}$ be
the analytic curve given in Corollary~\ref{corostr}, such that one its endpoint
is in $S_{1}$.
Then, the second endpoint of $L_{1}$ may \\
a) be equal $S_{2}$;\quad b) be equal $S_{4}$;\quad c) belong to $\gf$;\quad
d) belong to $\g_{1}$;\quad e) belong to $\g_{4}$.
Other possibilities are eliminated by symmetries of $\skk$ and
Lemma~\ref{nocritical}.

We will show that  $\alpha\leq 0$ or $\beta\geq 0$ in all these cases (a)-(e).

a)
Suppose that $L_{1}$ is a curve connecting $S_{1}$ and   $S_{2}$ (see fig.~$1$).
Then $\wf_{0}=L_{1}\cup L_{2} $, where $L_{2}$ is an analytic curve connecting
$S_{3}$ and $S_{4}$.   We denote an open subset of $\Omega$ which
consists of two regions bounded  by curves
$L_{1}$,  $\g_{1}$ and $L_{2}$,  $\g_{3}$ (we use the symmetries of $\skk$)  by
$U$. We notice that  function $\skk$ should be negative in $U$. Indeed,  because
otherwise function $\skk_{|U}$ would have a positive maximum located on
$\g_{1}$. But this is not permitted by Hopf Lemma, because
$\skk$ satisfies condition (\ref{d})${}_{2}$. Furthermore, we deduce that $\skk$
is positive in $U^{c}\equiv \Omega\setminus \overline{U}$. This is indeed the
case, because $\skk$ is positive in a neighborhood of $S_{1}$, contained in
$U^{c}$ (see Corollary~\ref{wnsi}) and  $\wfz \cap U^{c}=\emptyset$, i.e. $\skk$
can not be negative by intermediate value theorem. Thus $\alpha\leq 0$
\textbf{and} $\beta\geq 0$.

b)
Suppose that $L_{1}$ is a curve connecting $S_{1}$ and   $S_{4}$ (see fig.~$2$).
Then, proceeding analogously, we get $\alpha\leq 0$ \textbf{and} $\beta\geq 0$.

\begin{picture}(200,300)(50,30)
\put(80,300){\line(1,0){150}} \put(80,50){\line(1,0){150}}
\put(80,50){\line(0,1){250}} \put(230,50){\line(0,1){250}}

\put(128,220){\line(1,0){54}} \put(128,130){\line(1,0){54}}
\put(128,130){\line(0,1){90}} \put(182,130){\line(0,1){90}}

\setquadratic \plot 182 220 212 250 155 260 98 250 128 220 /

\setquadratic \plot 182 130 212 100 155 90 98 100 128 130 /

\put(146,240){\scriptsize{$\skk <0$}}
\put(146,105){\scriptsize{$\skk <0$}}

\put(92,175){\scriptsize{$\skk >0$}}
\put(200,175){\scriptsize{$\skk >0$}}

\put(153,210){\scriptsize{$\g_{1}$}}
\put(153,135){\scriptsize{$\g_{3}$}}
\put(117,212){\scriptsize{$S_{2}$}}
\put(184,212){\scriptsize{$S_{1}$}}
\put(117,133){\scriptsize{$S_{3}$}}
\put(184,133){\scriptsize{$S_{4}$}}
\put(160,302){\scriptsize{$\gf$}}
\put(142,35){Fig. 1}
\label{picone}
\end{picture}
\begin{picture}(200,300)(50,30)
\put(80,300){\line(1,0){150}} \put(80,50){\line(1,0){150}}
\put(80,50){\line(0,1){250}} \put(230,50){\line(0,1){250}}

\put(128,220){\line(1,0){54}} \put(128,130){\line(1,0){54}}
\put(128,130){\line(0,1){90}} \put(182,130){\line(0,1){90}}

\setquadratic \plot 182 220 208 245 218 175 208 105 182 130 /

\setquadratic \plot 128 220 102 245 92 175 102 105 128 130 /

\put(146,240){\scriptsize{$\skk <0$}}
\put(146,105){\scriptsize{$\skk <0$}}

\put(99,175){\scriptsize{$\skk >0$}}
\put(190,175){\scriptsize{$\skk >0$}}

\put(153,210){\scriptsize{$\g_{1}$}}
\put(153,135){\scriptsize{$\g_{3}$}}
\put(117,212){\scriptsize{$S_{2}$}}
\put(184,212){\scriptsize{$S_{1}$}}
\put(117,133){\scriptsize{$S_{3}$}}
\put(184,133){\scriptsize{$S_{4}$}}
\put(160,302){\scriptsize{$\gf$}}

\put(142,35){Fig. 2}

\label{pictwo}
\end{picture}

c) If $L_{1}$ connects $S_{1}$ and $\gf$, then $\wf_{0}= L_{1}\cup L_{2} \cup
L_{3} \cup L_{4}$, where $L_{i} $ are analytic curves with one endpoint in
$S_{i}$ and the second one on $\gf$ (see fig.~$3$).  Hence, $\Omega$ is divided
onto four regions. Arguing as earlier, we deduce that in the region above
$\g_{1}$ function $\skk$ is negative, but in the region on the right of $\g_{4}$
function $\skk$ is positive. Thus $\alpha\leq 0$ \textbf{and} $\beta\geq 0$.

d)
If the second endpoint of $L_{1}$ is on $\g_{1}$, then by symmetries of the
problem, we deduce that $\wf_{0}= L_{1}\cup L_{2} \cup L_{3} \cup L_{4}$,
where $L_{i} $, $i=1,\ldots,4$ are analytic curves with one endpoint $S_{i}$
and the second one in $\g_{1}$ or $\g_{3}$ (see fig.~$4$).  Then, we denote an
open subset of $\Omega$, consisting of four  regions bounded  by
curves $L_{i}$, $i=1,\ldots,4$, by
$U$. In set $U^{c}$ function $\skk$ is positive, because there are points with
this property in 
$U^{c}$ 
and $\wfz \cap U^{c}
=\emptyset.$ Thus, in this case we can only show that $\beta \geq0$. Hence
$\alpha\leq 0$ \textbf{or} $\beta\geq 0$.

e)
If the second endpoint of $L_{1}$ is in $\g_{4}$, then proceeding similarly as
above we deduce that $\alpha \leq 0$, hence $\alpha\leq 0$ \textbf{or}
$\beta\geq 0$.

Therefore, in any case $\alpha\leq 0$ \textbf{or} $\beta\geq 0$  and the proof
in finished.

\begin{picture}(200,300)(50,30)
\put(80,300){\line(1,0){150}} \put(80,50){\line(1,0){150}}
\put(80,50){\line(0,1){250}} \put(230,50){\line(0,1){250}}

\put(128,220){\line(1,0){54}} \put(128,130){\line(1,0){54}}
\put(128,130){\line(0,1){90}} \put(182,130){\line(0,1){90}}

\setquadratic \plot 128 220  105 260  90 300 /
\setquadratic \plot 182 220  205 260  220 300 /
\setquadratic \plot 128 130  105 90  90 50 /
\setquadratic \plot 182 130  205 90  220 50 /

\put(146,240){\scriptsize{$\skk <0$}}
\put(146,105){\scriptsize{$\skk <0$}}

\put(99,175){\scriptsize{$\skk >0$}}
\put(190,175){\scriptsize{$\skk >0$}}

\put(153,210){\scriptsize{$\g_{1}$}}
\put(153,135){\scriptsize{$\g_{3}$}}
\put(117,212){\scriptsize{$S_{2}$}}
\put(184,212){\scriptsize{$S_{1}$}}
\put(117,133){\scriptsize{$S_{3}$}}
\put(184,133){\scriptsize{$S_{4}$}}
\put(160,302){\scriptsize{$\gf$}}
\put(142,35){Fig. 3}
\label{picthree}
\end{picture}
\begin{picture}(200,300)(50,30)
\put(80,300){\line(1,0){150}} \put(80,50){\line(1,0){150}}
\put(80,50){\line(0,1){250}} \put(230,50){\line(0,1){250}}

\put(128,220){\line(1,0){54}} \put(128,130){\line(1,0){54}}
\put(128,130){\line(0,1){90}} \put(182,130){\line(0,1){90}}

\setquadratic \plot 128 220 113 237 122 250 142 245 148 220 /
\setquadratic \plot 182 220 197 237 188 250 168 245 162 220 /

\setquadratic \plot 128 130 113 113 122 100 142 105 148 130 /
\setquadratic \plot 182 130 197 113 188 100 168 105 162 130 /

\put(123,229){\scriptsize{$\skk <0$}}
\put(168,229){\scriptsize{$\skk <0$}}

\put(121,109){\scriptsize{$\skk <0$}}
\put(170,109){\scriptsize{$\skk <0$}}

\put(99,175){\scriptsize{$\skk >0$}}
\put(190,175){\scriptsize{$\skk >0$}}

\put(153,210){\scriptsize{$\g_{1}$}}
\put(153,135){\scriptsize{$\g_{3}$}}
\put(117,212){\scriptsize{$S_{2}$}}
\put(184,212){\scriptsize{$S_{1}$}}
\put(117,133){\scriptsize{$S_{3}$}}
\put(184,133){\scriptsize{$S_{4}$}}
\put(160,302){\scriptsize{$\gf$}}
\put(142,35){Fig. 4}
\label{picfour}
\end{picture}

\end{proof}

\begin{proof}[Proof of theorem~\ref{tm2}] Let us denote $\alpha_{1}=
\int_{\g_{1}}\skk $ and $\beta_{1}= \int_{\g_{2}}\skk $. Then, from
Lemma~\ref{lemmain}, we get $\alpha_{1}<0$ or $\beta_{1}>0$ and the claim
follows
Corollary~\ref{wnreg}.

\end{proof}

\subsection{A square inside a square}

The situation is much simpler if we assume that $r_{1}=r_{2}$, i.e. $R_{1}$ and
$R_{2}$ are squares. Then, we can say more about properties of the very weak
solutions $\skk$, because  the domain $\Omega$ enjoys additional symmetry. Here
is our first observation
\begin{proposition}\label{uwd}
 If the rectangle $R_1$ in the definition of $\Omega$ is a square, i.e. $\rj=\rd$,
then $\skk(x,y)=-\skk(y,x)$. In particular, $\skk(x,x)=\skk(-x,x)=\skk(x,-x)=\skk(-x,-x)=0$.
\end{proposition}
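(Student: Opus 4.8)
The plan is to exploit the extra geometric symmetry of $\Omega$ when $R_1$ is a square. First I would set up the reflection. When $\rj=\rd$, the domain $\Omega=\lambda_0 Q\setminus\overline Q$ is invariant under the map $T(x,y)=(y,x)$, i.e.\ reflection across the diagonal $\{x=y\}$; this is simply because both $Q$ and $\lambda_0 Q$ are symmetric squares. Now define $v(x,y):=-\skk(y,x)=-\skk(T(x,y))$. I would check that $v$ satisfies all the defining properties of $\skk$ listed in Corollary~\ref{uwj}. Harmonicity of $v$ follows since $T$ is an (orientation-reversing) isometry and $\skk$ is harmonic; the normal-derivative condition $\pna{v}=0$ on $\g$ and the Dirichlet condition $\agf{v}=0$ on $\gf$ are preserved because $T$ maps $\g$ onto $\g$ and $\gf$ onto $\gf$, permuting the sides $\g_i$ and the corners $S_i$ among themselves (and likewise for $\gf$), so the boundary conditions are just carried along; the normalization $\|v\|_{\ld}=1$ holds since $T$ is measure-preserving on $\Omega$; and $v\in\ld\setminus\hj$ likewise, since composition with the isometry $T$ does not change Sobolev regularity. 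The symmetry conditions (\ref{e}) for $v$ follow from those for $\skk$ together with the commutation of $T$ with the reflections $(x,y)\mapsto(-x,y)$ etc.

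Having verified that $v$ obeys (\ref{d})--(\ref{f}), the uniqueness statement of Corollary~\ref{uwj} forces $v=\skk$ or $v=-\skk$. To pin down the sign I would evaluate near a suitable vertex: from the explicit leading term in the definition (\ref{pr1})--(\ref{pr1+1}) of $\skk$, namely $\skk\approx c_0\,\rhi^{-2/3}\coi$ near $S_i$ with $c_0>0$, one sees that $\skk$ has a definite sign (say positive along the bisector direction $\te_i=0$) near each vertex. Applying $T$ maps the vertex $S_i$ to the vertex $T(S_i)$, preserving the value $c_0\rho^{-2/3}$ at the corresponding bisector point, so $-\skk(T(\cdot))$ has the \emph{opposite} sign there. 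Hence $v=-\skk$ is impossible and we must have $v=\skk$, i.e.\ $\skk(x,y)=-\skk(y,x)$. (Alternatively one can argue: if $v=\skk$ then we are done; if $v=-\skk$ then $\skk(y,x)=\skk(x,y)$, and combined with the known evenness (\ref{e}) and the diagonal reflection this would give too large a symmetry group, leading as in Lemma~\ref{lemmain}'s reflection arguments to $\skk\equiv 0$ on a subdomain by Hopf's lemma, a contradiction — but the leading-term sign check is cleaner.)

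The second sentence is then immediate: on the diagonal $\{y=x\}$ the identity $\skk(x,x)=-\skk(x,x)$ gives $\skk(x,x)=0$, and combining with the four sign symmetries (\ref{e}) of $\skk$ (which map $\{y=x\}$ to $\{y=-x\}$ and fix it setwise) yields $\skk(-x,x)=\skk(x,-x)=\skk(-x,-x)=0$ as well.

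The main obstacle is the bookkeeping in the first paragraph: one must check carefully that the diagonal reflection $T$ genuinely permutes the pieces $\g_i, S_i$ (and $\gf_i,\widetilde S_i$) of the boundary and is compatible with the cutoff functions $\eta_i$ used to build $\skk$, so that \emph{all} of the conditions (\ref{d})--(\ref{f}), including the normalization and the precise symmetry relations (\ref{e}), transfer to $v$. Once that is in place, uniqueness from Corollary~\ref{uwj} plus the elementary sign computation finishes the argument with essentially no further work.
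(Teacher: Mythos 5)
Your proof is correct and is essentially the paper's argument: the paper applies the rotation $(x,y)\mapsto(-y,x)$, reduces it to the diagonal swap via the evenness (\ref{e}), invokes the uniqueness of Corollary~\ref{uwj} to get $\skk(y,x)=\pm\skk(x,y)$, and rules out the symmetric alternative by comparing the leading singular term $c_{0}\varrho_{i}^{-\frac{2}{3}}\cos\frac{2}{3}\theta_{i}$ at a vertex, exactly as you propose. One terminological slip to fix: $\theta_{i}=0$ is an edge direction, not the bisector of the reentrant corner (on the bisector $\theta_{i}=\tfrac{3}{4}\pi$ the leading term vanishes, consistently with $\skk$ vanishing on the diagonal), so the sign comparison must be made along an edge, which is what your formula $\theta_{i}=0$ actually does.
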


\begin{proof}
After rotating function $\skk$ by angle $\frac{\pi}{2}$, i.e. after the change
of variables $(x,y)\mapsto (-y,x)$, we get a function $\skk(-y,x)$
satisfying (\ref{d})-(\ref{f}). Thus, by  Corollary \ref{uwj}, we have
$\skk(-y,x)=\skk(x,y)$ or $\skk(-y,x)=-\skk(x,y)$. More precisely, from
(\ref{e}), we have $\skk(y,x)=\skk(x,y)$ or $\skk(y,x)=-\skk(x,y)$.
If the first possibility held, then from definition of $\skk$
we would get $w(x,y)+\sum\limits_{i=1}^{4}\eta_{i}
\varrho_{i}^{-\frac{2}{3}} \coi=w(y,x)+\sum\limits_{i=1}^{4}\eta_{i}
\varrho_{i}^{-\frac{2}{3}}
\cos{\frac{2}{3}(\frac{3}{2}\pi-\te_{i})}$, which is impossible, because
$\coi \ne \cos(\pi-\frac{2}{3}\te_{i})$.
\end{proof}

\begin{lemma}
Let us suppose that $\skk$ is given by Definition \ref{def2} and   the rectangle
$R_1$ in the definition of $\Omega$ is a square.  We
set $\alpha=\sup\limits_{\gj}{\skk}$ and $\beta=\inf\limits_{\g_{2}}{\skk}$. Then $\alpha <0 $ and $\beta > 0$.
\label{lemmainq}
\end{lemma}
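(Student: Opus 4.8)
The statement sharpens Lemma~\ref{lemmain} in the square case from ``$\alpha\le 0$ \emph{or} $\beta\ge 0$'' to ``$\alpha<0$ \emph{and} $\beta>0$''. The strategy is to exploit the extra symmetry $\skk(x,y)=-\skk(y,x)$ from Proposition~\ref{uwd} to rule out the asymmetric cases (d) and (e) that appeared in the proof of Lemma~\ref{lemmain}, and then to upgrade the nonstrict inequalities to strict ones. First I would revisit the case analysis on the zero level set $\wf_{0}$: by Corollary~\ref{corostr} and Lemma~\ref{nocritical}, $\wf_{0}$ is a union of analytic curves each having an endpoint among the vertices $S_{i}$, and the curve $L_{1}$ emanating from $S_{1}$ has its other endpoint in one of the five locations (a)--(e) listed in the proof of Lemma~\ref{lemmain}. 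The key new observation is that the antisymmetry $\skk(x,y)=-\skk(y,x)$ reflects $\g_{1}$ (the top side) onto $\g_{4}$ (the right side), so the line $\{y=x\}$ is contained in $\wf_{0}$; in particular, by Proposition~\ref{uwd} the diagonal through $S_{1}$ lies in $\wf_{0}$. This forces $L_{1}$ to be exactly (a segment of) that diagonal, which connects $S_{1}$ to $S_{3}$, so only a configuration like case (a) survives, and symmetrically $\wf_{0}$ consists of the two diagonals. Cases (d) and (e), where $L_{1}$ ends on $\g_{1}$ or $\g_{4}$, are incompatible with $L_{1}$ lying on the diagonal (unless it coincided with it, already covered), and cases (b), (c) are excluded likewise.

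Once $\wf_{0}$ is pinned down to the two diagonals, $\Omega$ is split into four congruent ``triangular'' regions, and as in case (a) of Lemma~\ref{lemmain}, Hopf's lemma applied on the region adjacent to $\g_{1}$ shows $\skk<0$ there (a positive interior maximum would have to sit on $\g_{1}$, contradicting $\pna{\skk}=0$), while $\skk>0$ on the region adjacent to $\g_{4}$; by antisymmetry these two statements are equivalent. This already gives $\alpha=\sup_{\g_{1}}\skk\le 0$ and $\beta=\inf_{\g_{2}}\skk\ge 0$ (note $\g_{2}$ is a reflection of $\g_{4}$-type side, so $\skk\ge 0$ there). The remaining work is to make these strict. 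Here I would argue by contradiction: if $\alpha=\sup_{\g_{1}}\skk=0$, then $\skk$ attains its maximum over the closed region $\overline{U}$ adjacent to $\g_{1}$ at an interior point of $\g_{1}$ (since $\skk<0$ in the open region and $\skk=0$ on the diagonal parts of $\partial U$ and at $S_{1}, S_{2}$); but $\skk$ is smooth up to the interior of $\g_{1}$ and $\pna{\skk}=0$ there, so Hopf's lemma yields a contradiction exactly as before — unless the maximum is attained only at the vertices $S_{1}$ or $S_{2}$, which is excluded because near $S_{i}$ the function behaves like $c_{0}\rho_{i}^{-2/3}\cos\tfrac{2}{3}\theta_{i}$ and hence is unbounded (Lemma~\ref{step6}), so $\sup_{\g_{1}}\skk$ is not an honest value unless... — more carefully, one restricts to $\g_{1}\setminus(\cU)$ away from the vertices where Lemma~\ref{step2}/\ref{h} control the sign, concluding $\skk<0$ strictly on all of $\g_{1}$. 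Then $\alpha<0$, and by antisymmetry $\beta>0$.

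The main obstacle I anticipate is the behavior at the vertices: near $S_{1}$ and $S_{2}$ the function $\skk$ blows up like $\rho^{-2/3}$, so ``$\sup_{\g_{1}}\skk$'' and ``$\inf_{\g_{2}}\skk$'' need careful interpretation, and one must be sure the strict inequality survives taking the supremum. I expect this is handled by combining Corollary~\ref{wnsi} (the level sets near $S_{i}$ are nice analytic curves splitting a neighborhood into a $\skk>k$ part and a $\skk<k$ part) with the fact that, on the side $\g_{1}$ adjacent to the region where $\skk<0$, approaching $S_{1}$ along $\g_{1}$ one stays in the ``$\skk<0$'' sector — more precisely, $\te_{i}$ along $\g_{1}$ near $S_{1}$ corresponds to $\cos\tfrac{2}{3}\theta_{i}<0$ — so in fact $\skk\to-\infty$ there, which is fully consistent with $\alpha<0$. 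The one genuinely delicate point is excluding that the supremum over $\g_{1}$ equals $0$ through a sequence approaching an endpoint; this is resolved because near $S_{1}$ the sign of $\skk$ on $\g_{1}$ is strictly negative by the asymptotics, and on the compact complement Hopf's lemma gives strict negativity, so the sup is a strict negative maximum. I would then state that $\beta>0$ follows immediately from $\alpha<0$ and Proposition~\ref{uwd}.
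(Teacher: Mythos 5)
Your proposal follows essentially the same route as the paper: use the antisymmetry $\skk(x,y)=-\skk(y,x)$ together with the structure result for $\wf_0$ to pin the zero level set down to the diagonal segments, deduce the sign of $\skk$ in the four resulting regions via Hopf's lemma, upgrade $\alpha\le 0$ to $\alpha<0$ by applying Hopf's lemma at a hypothetical interior point $p\in\g_1$ with $\skk(p)=0$, and finally get $\beta=-\alpha>0$ from Proposition~\ref{uwd}. One geometric slip: the diagonal $\{y=x\}$ intersected with $\Omega$ does \emph{not} connect $S_1$ to $S_3$ (the inner square is removed from the domain); the component through $S_1$ runs outward from $S_1$ to $\widetilde{S}_1\in\gf$, so the surviving configuration is case (c) of Lemma~\ref{lemmain}, not case (a) — exactly as the paper states ("arguing as in part c)"). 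This does not affect the rest of your argument, since the resulting four-region sign pattern and the Hopf/strictness step are unchanged, and your handling of the vertex behavior ($\skk\to-\infty$ along $\g_1$ near $S_1,S_2$, so the supremum is attained at an interior point of $\g_1$) is the correct way to justify that the maximum is an honest interior boundary point.
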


\begin{proof}
From Corollary~\ref{corostr} and from the above proposition we deduce that
$\wf_{0}$
consists only of the four segments, each of them  connects  vertices  $S_{i}$
and
$\widetilde{S}_{i}$, $i=1, ..., 4$ (see fig. $5$).

\vspace{1.4cm}

\begin{picture}(300,300)(0,0)
\put(80,300){\line(1,0){250}} \put(80,50){\line(1,0){250}}
\put(80,50){\line(0,1){250}} \put(330,50){\line(0,1){250}}

\put(160,220){\line(1,0){90}} \put(160,130){\line(1,0){90}}
\put(160,130){\line(0,1){90}} \put(250,130){\line(0,1){90}}

\put(250,220){\line(1,1){80}} \put(160,220){\line(-1,1){80}}
\put(160,130){\line(-1,-1){80}} \put(250,130){\line(1,-1){80}}







\setquadratic \plot 250 220 260 225 261 220 258 215 250 210 /

\put(228,202){\scriptsize{$U^{M}_{4}$}}

\setquadratic \plot 250 220 255 230 250 231 245 228 240 220 /

\put(243,212){\line(2,5){6}} \put(243,208){\line(3,2){13}}

\put(200,210){\scriptsize{$\g_{1}$}}
\put(200,135){\scriptsize{$\g_{3}$}}
\put(200,290){\scriptsize{$\gf_{1}$}}
\put(200,55){\scriptsize{$\gf_{3}$}}
\put(85,170){\scriptsize{$\gf_{2}$}}
\put(165,170){\scriptsize{$\g_{2}$}}
\put(238,170){\scriptsize{$\g_{4}$}}
\put(317,170){\scriptsize{$\gf_{4}$}}

\put(280,180){\scriptsize{$\skk>0$}}
\put(110,180){\scriptsize{$\skk > 0$}}
\put(192,85){\scriptsize{$\skk <0$}}
\put(192,260){\scriptsize{$\skk <0$}}


\put(265,280){\scriptsize{$\skk(x,x)=0$}}
\put(105,280){\scriptsize{$\skk(-x,x)=0$}}

\put(190, 30){Fig. 5.}

\end{picture}

\no Then, arguing as in part c) of the proof of Lemma~\ref{lemmain}, we get
$\alpha
\leq 0 $ and $\beta \geq 0$. If $\alpha=0$, then $\skk(p)=0$ for some $p\in
\g_{1}$ and then $p$ would be the extremal point for $\skk$, restricted to the
subset
of $\Omega$, bounded by $\g_{1}$, $\gf_{1}$ and segments $(\pm x,  x)$, $x\in
(r_{1}, \la_{0} r_{1})$. Therefore, by Hopf Lemma $\frac{\partial \skk}{\partial
n }(p)>0$, which contradicts (\ref{d})${}_{2}$, hence $\alpha<0$.
Finally, from Proposition~\ref{uwd}, we get $\beta= - \alpha>0$.
\end{proof}

\begin{proof}[Proof of theorem~\ref{tm1}] Let us denote $\alpha_{1}=
\int_{\g_{1}}\skk $ and $\beta_{1}= \int_{\g_{2}}\skk $. Then from
Lemma~\ref{lemmainq} and Proposition~\ref{uwd} we get $\alpha_{1}=-\beta_{1}<0$
and the claim follows Corollary~\ref{wnreg}.
\end{proof}





\subsection*{Acknowledgment}
This work has been supported by the European Union in the framework of European Social Fund through the Warsaw University of Technology Development Programme, realized by Center for Advanced Studies. Both authors were in part supported by NCN through 2011/01/B/ST1/01197
grant.

\end{document}